\newcommand\eq[1] {(\ref{#1})}
\newtheorem{lemma}{Lemma}[section]
\newtheorem{corollary}{Corollary}[section]
\newtheorem{question}{Question}
\newcommand{\bfm}[1]{\mbox{\boldmath ${#1}$}}
\newcommand{\beqa}{\begin{eqnarray}}
\newcommand{\eeqa}[1]{\label{#1}\end{eqnarray}}
\newcommand{\beq}{\begin{equation}}
\newcommand{\eeq}[1]{\label{#1}\end{equation}}
\newcommand{\Gd}{\delta}
\newcommand{\GD}{\Delta}
\newcommand{\BGv}{\bfm\nu}
\newcommand{\CD}{{\cal D}}
\def\Bx{{\bf x}}
\def\By{{\bf y}}
\def\Bz{{\bf z}}
\def\B0{{\bf 0}}
\def \ph {\varphi}
\def \RR {{\mathbb R}}
\def \ba {\begin{array}}
\def \ea {\end{array}}
\newtheorem {Thm} {Theorem} [section]
\newtheorem {Pro} [Thm] {Proposition}
\newtheorem {Adef} [Thm] {Definition}
\newtheorem {Arem} [Thm] {Remark}
\newtheorem {Aexa} [Thm] {Example}
\newtheorem {Anot} [Thm] {Notation}
\def \refe #1.{(\ref{#1})}
\def \reff #1.{figure~\ref{#1}}
\def \refs #1.{section~\ref{#1}}
\def \refss #1.{subsection~\ref{#1}}
\def \refD #1.{Definition~\ref{#1}}
\def \refT #1.{Theorem~\ref{#1}}
\def \refL #1.{Lemma~\ref{#1}}
\def \refC #1.{Corollary~\ref{#1}}
\def \refP #1.{Proposition~\ref{#1}}
\def \refR #1.{Remark~\ref{#1}}
\def \refE #1.{Example~\ref{#1}}
\def \refN #1.{Notation~\ref{#1}}
\newcommand{\eps}{\varepsilon}
\newcommand{\lnn}{\mbox{ln}}
\newcommand{\Ker}{{\rm Ker}}
\newcommand{\dist}{\mbox{dist}}
\newif\ifPDF
\newenvironment{keywords}
{\noindent{\bf Key words.}\small}{\par\vspace{1ex}}
\title{On the active manipulation of quasistatic fields and its applications}
\author{
    Daniel Onofrei\thanks{
        Department of Mathematics,
        University of Houston,
        Houston, Texas 77004; onofrei@math.uh.edu
    }
}
\begin{document}
%%%%%%%%%%%%%%%%%%%%%%%%%%%%%%%%%%%%%%%%%%%%%%%%%%%%%%%%%%%%%%%%%%
%%%%%%%BEGIN DOCUMENT %%%%BEGIN DOCUMENT %%%%BEGIN DOCUMENT%%%%%%%
%%%%%%%%%%%%%%%%%%%%%%%%%%%%%%%%%%%%%%%%%%%%%%%%%%%%%%%%%%%%%%%%%%

\maketitle

\tableofcontents

%%%%%%%%%%%%%%%%%%%%%%%%%%%
%%%%%%%%ABSTRACT%%%%%%%%%%%
%%%%%%%%%%%%%%%%%%%%%%%%%%%

\begin{abstract}
Following the ideas proposed in \cite{OMV1} and \cite{OMV2} on active exterior cloaking, we present here a systematic integral equation method to generate suitable quasistatic fields for cloaking, illusions and energy focusing (with given accuracy) in multiple regions of interests. In the quasistatic regime, the central issue is to design appropriate source functions for the Laplace equation so that the resulting solution will satisfy the required properties. We show the existence and non-uniqueness of solutions to the problem and study the physically relevant unique $L^2$-minimal energy solution. we also provide some numerical evidences on the feasibility of the proposed approach.
\end{abstract}

%%%%%%%%%%%%%%%%%%%%%%%%%%%
%%%%%%%%%KEYWORDS%%%%%%%%%%
%%%%%%%%%%%%%%%%%%%%%%%%%%%

\begin{keywords}
    Field manipulation, quasistatics, layer potentials, integral equation, minimal energy solution, active exterior cloaking.
\end{keywords}

%%%%%%%%%%%%%%%%%%%%%%%%%%
%%%   AMS or PACS   %%%%%%
%%%%%%%%%%%%%%%%%%%%%%%%%%

%\begin{AMS}
%35Q55, 60H15.
%\end{AMS}

%%%%%%%%%%%%%%%%%%%%%%%%%%%%%%%%%%%%%%%%%%%%%%%%%%%%%%%%%%%%%%%%%%
%%%%%%%%%%%%%%%%%%%%%%%%%%%%%%%%%%%%%%%%%%%%%%%%%%%%%%%%%%%%%%%%%%
\section{Introduction}
%%%%%%%%%%%%%%%%%%%%%%%%%%%%%%%%%%%%%%%%%%%%%%%%%%%%%%%%%%%%%%%%%%
%%%%%%%%%%%%%%%%%%%%%%%%%%%%%%%%%%%%%%%%%%%%%%%%%%%%%%%%%%%%%%%%%%

The technique of manipulating acoustic and electromagnetic fields
in desired regions of space has been greatly advanced in the recent
years, mainly due to its fascinating applications such as,
cloaking, the creation of illusions, secret remote communication,
focusing energy, and novel imaging techniques. The development can
be roughly classified into two categories. 

The first type of techniques attempts to passively control the fields in the regions of interest by
changing the material properties of the medium in certain surrounding regions
while the second type of schemes  focus on the active manipulation (active control) of fields with the help of specially designed sources.

In \cite{Green1} the authors presented the first rigorous discussion of the passive manipulation of fields in the context of
quasistatics cloaking (see also 
\cite{Dol}, \cite{Post} and \cite{LaxN} where the invariance to a change of variables is fully explained and the transformed material are fully described), and was later extended in \cite{Pendry} to the
general case of passive manipulation of fields in the finite frequency regime (see also the review \cite{Chan2}
and references therein). These passive strategies are now known as ``transformation optics''.
The similar strategy in context of
acoustics was proposed in \cite{Cummer} (see also the review
\cite{Chan3} and references therein). The idea behind
transformation optics/acoustics is the invariance of the
corresponding Dirichlet to Neumann-map (boundary measurements map)
considered on some external boundary with respect to suitable
change of variables which are identity on the respective boundary.
This result implies that two different materials (the initial one
and the one obtained after the change of variables is applied)
occupying some region of space $\Omega$, will have the same
boundary measurements maps on $\partial \Omega$ and thus be
equivalent from the point of view of an external observer. This
leads to a long list of important applications, such as, cloaking,
field concentrators (\cite{Gunther1}) or field rotators, illusion optics, etc. (see
\cite{Chan2}, \cite{Chan3}, \cite{Green2}, \cite{Alu} and references therein), cloaking sensors while maintaining their sensing capability \cite{Gunther2}, \cite{ALU4},

Recently, in an effort to improve accuracy and stability of the transformation optics/acoustics, various regularization of this scheme have been studied (see \cite{Kohn} and references therein, \cite{Hongyu3}, \cite{Hoai}, \cite{Hongyu2}, \cite{Kang3},  \cite{Kang1},  \cite{Kang2}). Positive results about generating broadband low-loss metamaterial response have been obtained in \cite{Shalaev}, \cite{Valentine} and a new, more stable regularization strategy was recently proposed in \cite{Hongyu1}. 

In a parallel direction, many researchers focused on  other alternative field manipulation strategies. They can be grouped into two main categories, passive designs based either on artificial materials with extreme properties or on geometrical arguments, and active designs based on the active control of fields by only using antennas with no materials needed in the scheme, 

Among the alternative passive techniques proposed in the literature we could mention, plasmonic
designs (see \cite{Alu} and references therein), strategies based
on anomalous resonance phenomena (see \cite{Mil1}, \cite{Mil2},
\cite{Mil3}), conformal mapping techniques (see
\cite{Ulf1},\cite{Ulf2}), and complimentary media strategies (see
\cite{Chan}).

	Regarding the active designs for the manipulation of fields we mention that this idea appeared first in the 
 	context of low-frequency acoustics where various techniques for the active control of low-frequency sound 
 	(or active noise cancellation) were proposed in the literature, and we could mention here the pioneer works of Leug \cite{Leug} (feedforward control of sound) and Olson \& May \cite{Olson-May} (feedback control of sound). For a more detailed account of very interesting recent developments of the idea in the context of acoustics we mention the reviews \cite{Tsynkov}, \cite{Peterson}, \cite{Fuller}, \cite{Peake} \cite{Elliot} and the references therein. 
 	
In the electromagnetic regime, several active designs have been recently proposed in the literature and we could mention
the interior active cloaking strategy proposed in \cite{Miller}
which uses active boundaries and the exterior active cloaking
scheme discussed in \cite{OMV1}, \cite{OMV2}, \cite{OMV3},
\cite{OMV4} (see also \cite{CTchan-num}) which uses a discrete number of active sources
(antennas) to manipulate the fields. The active exterior strategy
for 2D quasistatics cloaking was introduced in \cite{OMV1}, were
based on \emph{a priori} information about the incoming field,
with the help of one active source (antenna), we constructively
described how one can create an almost zero field external region
while maintaining a very small scattering effect in the far field.
The proposed strategy did not work for objects closed to the
antennas, it cloaked large objects only when they are far enough
to the antenna (see \cite{OMV4}) and was not adaptable for three
space dimensions. The finite frequency case was studied in the
last section of \cite{OMV1} and in \cite{OMV3} (see also
\cite{OMV4} for a recent review) where three active sources
(antennas) were needed to create a zero field region in the
interior of their convex hull while creating a very small
scattering effect in the far field. The broadband character of the
proposed scheme was numerically observed in \cite{OMV2}. We
mention now that, from the point of view of the possible
applications the constraint that the antennas surround the region
of interests is not desirable and one would like to find a
solution for the active manipulation of fields by using only one
active source (antenna) as we proposed in \cite{OMV1}. 

In this paper, we address the problem formulated in Question \ref{QUST:Main} for the
particular case of the quasistatic regime and a homogeneous
environment). This problem is of course ill-posed and this explains the
multitude of possible approximate solutions proposed for it. Our
aim is to provide a unified mathematical theory for the general
problem of active manipulation of electromagnetic or acoustic
fields, which will work in a broadband regime and regardless of
dimension, will allow for robust computational simulations and for
the approximation of a stable optimal energy solution and will be
appropriate for the more general case of non-homogeneous
environments. In the present work we introduce the mathematical
theory and analyse the problem in the quasistatic regime (modelled by the Laplace operator)
corresponding to a homogeneous environment.

The paper is organized as follows. In Section~\ref{SEC:Form} we
formulate mathematically the problem of generating desired field
in certain regions of space using active sources. We then study in
Section~\ref{SEC:Exist} the existence of solutions of the
mathematical problem and Section~\ref{SEC:MES} the the
constructive approximation of a solution with minimum energy. We
provide some numerical simulations to support our theoretical
results in Section~\ref{SEC:Num}. Concluding and further remarks
are offered in Section~\ref{SEC:Concl}. Finally, for the sake of
completeness, we added the proofs for two technical results in the
Appendix.

%%%%%%%%%%%%%%%%%%%%%%%%%%%%%%%%%%%%%%%%%%%%%%%%%%%%%%%%%%%%%%%%%%
%%%%%%%%%%%%%%%%%%%%%%%%%%%%%%%%%%%%%%%%%%%%%%%%%%%%%%%%%%%%%%%%%%
\section{Problem formulation}
\label{SEC:Form}
%%%%%%%%%%%%%%%%%%%%%%%%%%%%%%%%%%%%%%%%%%%%%%%%%%%%%%%%%%%%%%%%%%
%%%%%%%%%%%%%%%%%%%%%%%%%%%%%%%%%%%%%%%%%%%%%%%%%%%%%%%%%%%%%%%%%%

Let $D_\delta\subset\RR^d$ ($d=2,3$) be a small neighborhood of
the origin $\B0$, and $D$ a given smooth domain containing
$D_\delta$. Let the regions of interest, $\{D_k\}_{k=1}^N$, be $N$
subdomains of $D$ (i.e. $D_k\subset\subset D$, $1\le k\le N$) that
are disjoint in the sense that
$\overline{D}_k\cap\overline{D}_{k'}=\emptyset, \forall k\neq k'$.
We also require that $D_\delta$ be disjoint with $D_k$,
$\overline{D}_\delta\cap \overline{D}_k=\emptyset, \mbox{ for all
} k$. We denote $u_0$ a smooth function on $\RR^d\backslash
\overline{D}$, and by $u_k$ a smooth function that is harmonic in
a neighborhood of $D_k$, i.e. $\Delta u_k=0$ in $V\subset \RR^d$
with $D_k\subset\subset V$. Then the general mathematical question
that we want to ask in quasistatic regime is:

\begin{question}\label{QUST:Main}
Can we design an exterior active source (antenna), modeled as a
continuous function $h(\Bx)$ supported on $\partial D_\delta$,
such that the harmonic field in $\RR^d\backslash D_\delta$
generated by $h(\Bx)$, say $u$, has the property that $u\approx
u_0$ in $\RR\backslash \overline{D}$ and $u\approx u_k$ in $D_k$
for all $1\le k\le N$, where by $\approx$ we mean a good
approximation in the uniform convergence norm?
\end{question}

This question appears naturally in many applications.
For instance, if the answer to Question~\ref{QUST:Main} is positive,
then one can use the active source (antenna) on $D_\delta$ to generate
a zero field in $\bigcup_{k=1}^N D_k$ and a scattering field $u_0$ corresponding
to an arbitrary object in $\RR^d\backslash\overline{D}$ to create an illusion
for an external (outside of $\overline{D}$) observer. One can also program the
active source (antenna) to approximate $N$ different desired fields in each of
the regions $D_k$, $1\le k\le N$ while creating a zero field region in $\RR^d\backslash\overline{D}$,
thus sending information to regions of interests without being detected by an outside observer.

%Then, from the physical point of view, we say that a certain region $D\subset\RR^d$, compact subset of a larger region $\Omega\subset\RR^d$, is actively cloaked from an incoming probing field $\Bu_i$, if there exists a field $\Bu_d$ to be generated by an external cloaking device such that $\Bu_d+\Bu_i\approx 0$ in $D$ and $\Bu_d\approx 0$ outside the larger region $\Omega$.

We now study Question~\ref{QUST:Main} in more detail. To simplify
the presentation, but without loss of generality,  we assume that
all regions involved in Question~\ref{QUST:Main} are balls in
$\RR^d$. We denote by $B_r(\Bx)$ the d-dimensional open ball that
centered at $\Bx\in\RR^d$ with radius $r>0$. Moreover, we first
present the case where only one region of interests is involved
and then, in Remark \ref{Rem1}, show how the general result (i.e.,
the case of $N$ region of interests) follows as an immediate
consequence. Thus, let us consider Question~\ref{QUST:Main} with
$N=1$, $D_\Gd=B_\Gd(0)$, $D=B_R(\B0)$ and $D_1=B_a(\Bx_0)$ and
(for technical reasons to be discussed later) new parameters $R'$,
and $a'$ such that
\begin{equation}
\label{par-cond}
    a<a',\ R'<R,\ |\Bx_0|>a'+\delta,\ \mbox{ and }\ R'>|\Bx_0|+a'.
\end{equation}
A schematic illustration of the problem setting and various
geometrical parameters are show in Fig.~\ref{FIG:Geo}. Then, in
the case when $u_0$ denotes a homogeneous quasistatic potential
Question~\ref{QUST:Main} can be formulated mathematically as
follows.

\paragraph{\bf Formulation A.}
 Let $0<\eps\ll 1$ be fixed. Find a function $h\in C(\partial B_\delta(\B0) )$ such that there exists $v\in C^2(\RR^d\setminus
 \overline{B}_\delta(\B0))\cap C^1(\RR^d\setminus
 B_\delta(\B0))$ solution of,
 \beq\vspace{0.15cm}\left\{\vspace{0.15cm}\begin{array}{llll}
 \GD v=0 \mbox{ in }\RR^d\setminus \overline{B}_\Gd(\B0)\vspace{0.15cm}\\
 v=h \mbox{ on }\partial B_\Gd(\B0)\vspace{0.15cm}\\
 \Vert v-u_1\Vert_{C(\bar{B}_a(\Bx_0))}\le \eps\vspace{0.15cm}\\
 \Vert v-u_0\Vert_{C(\RR^d\setminus B_R(\B0))}\le \eps\end{array}\right.
 \eeq{2} where $u_1$ is a given function harmonic in a set containing $B_a(\Bx_0)$ and the norm $\|\cdot\|_{C(X)}$ is the
usual uniform norm on continuous functions defined on $X$.
\begin{figure}[ht]
    \centering
    \includegraphics[trim=0cm 0cm 0cm 0cm,clip,angle=0,width=0.4\textwidth]{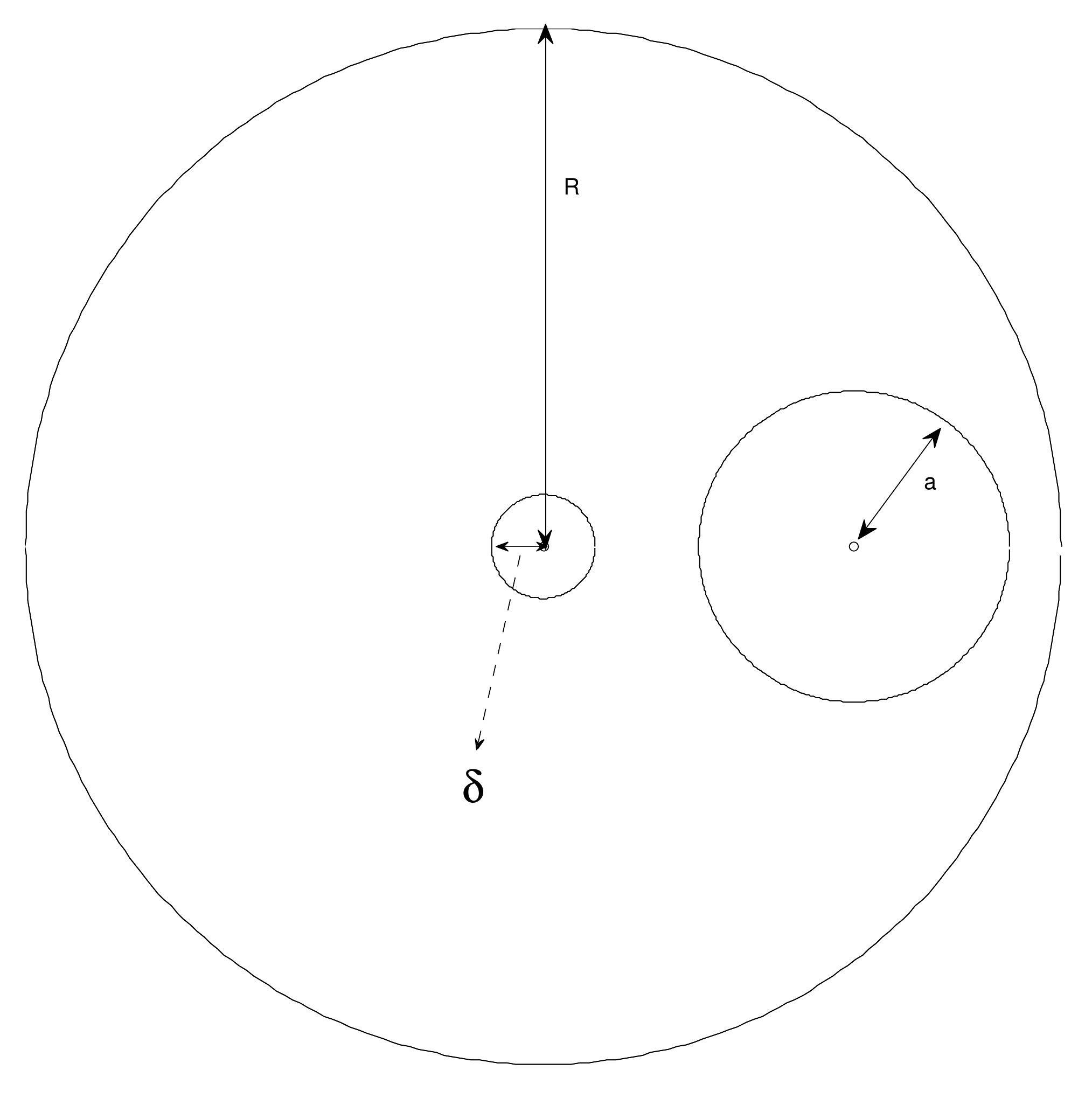}
    \caption{The geometrical setting of Formulation A.}
    \label{FIG:Geo}
\end{figure}

If we subtract $u_0$ from $v$ in Formulation A, and denote by
$u\equiv v-u_0$, $g\equiv h-u_0$, we obtain an equivalent
formulation of the original problem.
\paragraph{\bf Formulation A$'$.}
Let $0<\eps\ll 1$ be fixed. Find a function $g\in C(\partial B_\delta(\B0) )$ such that there exists $u\in C^2(\RR^d\setminus\overline{B}_\delta(\B0))\cap C^1(\RR^d\setminus
 B_\delta(\B0))$ solution of,
\beq\vspace{0.15cm}\left\{\vspace{0.15cm}\begin{array}{llll}
 \GD u=0 \mbox{ in }\RR^d\setminus \overline{B}_\Gd(\B0)\vspace{0.15cm}\\
 u=g \mbox{ on }\partial B_\Gd(\B0)\vspace{0.15cm}\\
 \Vert u+u_0-u_1\Vert_{C(\bar{B}_a(\Bx_0))}\le \eps\vspace{0.15cm}\\
 \Vert u\Vert_{C(\RR^d\setminus B_R(\B0))}\le \eps\end{array}\right.
 \eeq{3}
Thus a solution of our problem is a function $g\in C(\partial
B_\delta)$ (resp. $h\in C(\partial B_\delta)$ for \eq{2}), such
that there exists at least a solution for problem \eq{3} (resp.
\eq{2}). Such a solution will describe the required potential to
be generated at the active source (antenna) so that an
approximation of $0$ (resp. $-u_0$) in the region $B_a(\Bx_0)$
with $\eps$-accuracy will be possible with a very small
perturbation of the far field (resp. very small far field).

%Further in the paper, without loosing the generality we will assume  that $\Bx_0=(c,0)$ if $d=2$ or $\Bx_0=(c,0,0)$ if $d=3$ and for any $r>0$ we will write $B_r(c)$ for $B_r(\Bx_0)$.

Let $a', R', \Bx_0, \Gd$ be as before. We introduce the following space $\Xi$,
\begin{equation}\label{EQ:Parameters}
    \Xi \equiv L^2(\partial B_{a'}(\Bx_0))\times L^2(\partial B_{R'(\B0)})
\end{equation}
Then $\Xi$ is a Hilbert space with
respect to the scalar product given by
\begin{equation}\label{EQ:IP}
(\varphi,\psi)_{\Xi}=\int_{\partial
B_{a'}(\Bx_0)}\varphi_1(\By)\psi_1(\By)  ds_{\By}+\int_{\partial
B_{R'}(\B0)}\varphi_2(\By)\psi_2(\By) ds_{\By}
\end{equation}
for all $\varphi \equiv (\varphi_1,\varphi_2)$ and $\psi=(\psi_1,\psi_2)$ in
$\Xi$. The next lemma presents two technical regularity results
which, in order to make the paper self contained, will be proved
in the Appendix.

\begin{lemma}\label{lemma-1}
Let $0<R_1<R_*<R_2$ be three constants and $\By_0\in\RR^d$ an arbitrary point.
%Consider three concentric balls centered in some arbitrary point, ${\By}_0\in\RR^d$, i.e., $B_{R_1}=
%B_{R_1}({\By}_0)$, $B_{R_*}=B_{R_*}({\By}_0)$, and $B_{R_2}=B_{R_2}({\Bx}_0)$, with $R_1<R_*<R_2$.
Let $f,g\in C(\partial B_{R_*}(\By_0))$ and define $v_i\in C^2(B_{R_*}(\By_0))\cap C^1(\overline{B}_{R_*}(\By_0))$ and
$v_e\in C^2(\RR^d\setminus \overline{B}_{R_*}(\By_0))\cap C^1(\RR^d\setminus B_{R_*}(\By_0))$ to be the solutions of the following interior and exterior Dirichlet problems respectively,
\beq
\left\{\begin{array}{ll}
\GD v_i =0 \mbox{ in } B_{R_*}(\By_0)\vspace{0.15cm}\\
v_i=f \mbox{ on } \partial
B_{R_*}(\By_0)\vspace{0.15cm}\end{array}\right.
\eeq{6}
and
\beq
\left\{\begin{array}{lll}\GD v_e =0 \mbox{ in }\RR^d\setminus \bar
B_{R_*}(\By_0)\vspace{0.15cm}\\
v_e=g \mbox{ on } \partial
B_{R_*}(\By_0)\vspace{0.15cm} \\
v_e=\left\{\begin{array}{ll}O(1)\mbox{ for } |\Bx|\rightarrow
\infty,
\mbox{ if } d=2\vspace{0.15cm}\\
o(1)\mbox{ for } |\Bx|\rightarrow \infty, \mbox{ if }
d=3\end{array}\right.\end{array}\right.
\eeq{7}
Then we have,
$$(i)\; \displaystyle\Vert v_i\Vert_{C(\bar{B}_{R_1}(\By_0))}\leq
\frac{R_*+R_1}{|B_1|R_*(R_*-R_1)^{d-1}}\Vert f\Vert_{L^1(\partial
B_{R_*}(\By_0))} $$
$$(ii)\; \displaystyle\Vert v_e\Vert_{C(\RR^d\setminus
B_{R_2}(\By_0))}\leq \frac{R_2+R_*}{|B_1|R_*(R_2-R_*)^{d-1}}\Vert
g\Vert_{L^1(\partial B_{R_*}(\By_0))}$$
where $|B_1|$ denotes the volume of the unit ball $B_1(\By_0)$.
\end{lemma}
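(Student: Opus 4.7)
Both estimates bound the $L^\infty$ norm of a harmonic function, on a region separated from the boundary $\partial B_{R_*}(\By_0)$, by the $L^1$ norm of its boundary data. The natural strategy is therefore to represent $v_i$ and $v_e$ through an explicit Poisson-type integral formula, and then bound the corresponding kernel uniformly over the relevant region.

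For part (i), I would invoke the classical Poisson integral formula for the interior Dirichlet problem,
\[
v_i(\Bx) \;=\; \int_{\partial B_{R_*}(\By_0)} \frac{R_*^2 - |\Bx-\By_0|^2}{\omega_{d-1}\, R_*\, |\Bx-\By|^{d}}\, f(\By)\, ds_{\By},
\qquad \Bx \in B_{R_*}(\By_0),
\]
where $\omega_{d-1}$ is the $(d-1)$-dimensional surface area of the unit sphere in $\RR^d$. For $\Bx \in \overline{B}_{R_1}(\By_0)$ and $\By \in \partial B_{R_*}(\By_0)$, factoring
\[
R_*^2 - |\Bx-\By_0|^2 \;=\; (R_* - |\Bx-\By_0|)(R_* + |\Bx-\By_0|) \;\le\; |\Bx-\By|\,(R_*+R_1),
\]
where the last inequality uses the triangle bound $R_* - |\Bx-\By_0| \le |\Bx-\By|$, and combining this with $|\Bx-\By|^{d-1} \ge (R_*-R_1)^{d-1}$ bounds the kernel uniformly by $(R_*+R_1)/[\omega_{d-1} R_* (R_*-R_1)^{d-1}]$. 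Pulling this constant out of the integral and using $\omega_{d-1} = d\,|B_1| \ge |B_1|$ for $d\ge 2$ yields the (slightly suboptimal) constant claimed in~(i).

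For part (ii) I would reduce to the interior case by the Kelvin transform centered at $\By_0$. Setting $\Bx^* := \By_0 + R_*^2(\Bx-\By_0)/|\Bx-\By_0|^2$ and $w(\Bx) := (R_*/|\Bx-\By_0|)^{d-2}\, v_e(\Bx^*)$, the function $w$ is harmonic on $B_{R_*}(\By_0)\setminus\{\By_0\}$, coincides with $g$ on $\partial B_{R_*}(\By_0)$, and extends across $\By_0$ to a bounded harmonic function on $B_{R_*}(\By_0)$ by the removable-singularity theorem. This extension is precisely where the dimension-dependent decay hypotheses come in: $v_e = O(1)$ in 2D (no multiplicative prefactor in $w$) and $v_e = o(1)$ in 3D (with the $1/|\Bx-\By_0|$ factor exactly compensating) are tailored so that $w$ is bounded near $\By_0$. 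Applying the interior Poisson formula to $w$ and changing variables back then gives
\[
v_e(\Bx) \;=\; \int_{\partial B_{R_*}(\By_0)} \frac{|\Bx-\By_0|^2 - R_*^2}{\omega_{d-1}\, R_*\, |\Bx-\By|^{d}}\, g(\By)\, ds_{\By},
\qquad |\Bx-\By_0| > R_*.
\]
The same factorization strategy yields the pointwise kernel bound $(|\Bx-\By_0|+R_*)/[\omega_{d-1}R_*(|\Bx-\By_0|-R_*)^{d-1}]$; a short calculus check (the derivative has a definite sign) shows that this expression is monotonically decreasing in $|\Bx-\By_0|$ on $[R_2,\infty)$, so its supremum is attained at $|\Bx-\By_0|=R_2$, giving the stated bound.

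The only real potential pitfall is getting the Kelvin transform and the dimensional split correct, since it is exactly the asymmetry between the 2D and 3D growth conditions at infinity that makes the exterior Poisson representation available. Once that representation is in hand, the rest is routine kernel bookkeeping, and the mild suboptimality of $|B_1|$ in place of $\omega_{d-1}$ in the denominator is immaterial.
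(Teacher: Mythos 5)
Your proof is correct and follows essentially the same route as the paper's: both rest on the interior and exterior Poisson integral representations, a factorization of $R_*^2-|\Bx-\By_0|^2$ combined with the reverse triangle inequality to bound the kernel, and monotonicity in $|\Bx-\By_0|$ to place the supremum at $R_1$ (resp.\ $R_2$). The only differences are cosmetic: the paper simply quotes the exterior Poisson formula where you derive it via the Kelvin transform, and you are in fact more careful than the paper about the normalizing constant (the paper writes $|B_1|$ where the Poisson kernel carries the surface area $\omega_{d-1}=d\,|B_1|$, which only makes the stated bound weaker and hence still valid).
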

The Big $O$ and little $o$ notations in the radiation condition
guaranteeing the uniqueness of the solution for the exterior
problem are the standard ones.

%%%%%%%%%%%%%%%%%%%%%%%%%%%%%%%%%%%%%%%%%%%%%%%%%%%%%%%%%%%%%%%%%%
%%%%%%%%%%%%%%%%%%%%%%%%%%%%%%%%%%%%%%%%%%%%%%%%%%%%%%%%%%%%%%%%%%
\section{Existence of solutions}
\label{SEC:Exist}
%%%%%%%%%%%%%%%%%%%%%%%%%%%%%%%%%%%%%%%%%%%%%%%%%%%%%%%%%%%%%%%%%%
%%%%%%%%%%%%%%%%%%%%%%%%%%%%%%%%%%%%%%%%%%%%%%%%%%%%%%%%%%%%%%%%%%

We are now ready to present the main results. Let us introduce the integral operator,
$K:L^2(\partial B_\Gd(\B0))\rightarrow \Xi$, defined as
\begin{equation}\label{EQ:K}
    Ku(\Bx,\Bz) =  (K_1u(\Bx), K_2u(\Bz))
\end{equation}
for any $u(\Bx)\in L^2(\partial B_\delta(\B0))$, where
\beqa
K_1u(\Bx)&=&\int_{\partial B_\Gd(\B0)}u(\By)\frac{\partial
\Phi(\Bx,\By)}{\partial \BGv_{\By}}ds_{\By}, \mbox{ for }\Bx\in
\partial B_{a'}(\Bx_0)\nonumber\\
&&\nonumber\\
K_2u(\Bz)&=&\int_{\partial B_\Gd(\B0)}u(\By)\frac{\partial
\Phi(\Bz,\By)}{\partial \BGv_{\By}}ds_{\By}, \mbox{ for }\Bz\in
\partial B_{R'}(\B0)
\eeqa{8}
where $\displaystyle\BGv_{\By}=\frac{\By}{|\By|}$ is the normal exterior to $\partial B_\Gd(\B0)$ and where $\Phi(\Bx, \By)$
represents the fundamental solution of the Laplace operator, i.e.,
\beq
\Phi(\Bx,\By)=\left\{\begin{array}{ll}\vspace{0.15cm}\displaystyle\frac{1}{2\pi}\lnn
\frac{1}{|\Bx-\By|},\mbox{
for } d=2\vspace{0.15cm}\\
\displaystyle\frac{1}{4\pi}\frac{1}{|\Bx-\By|},\mbox{ for }
d=3\end{array}\right.
\eeq{9}
The next result is classical but, for
the sake of completeness, we included its proof in the Appendix.

\begin{lemma}\label{lemma-2}
    The operator $K$ defined in~\eq{EQ:K} is a compact linear operator from $L^2(\partial B_\Gd(\B0))$ to $\Xi$.
\end{lemma}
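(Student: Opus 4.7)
Linearity of $K$ is immediate from the linearity of the integrals in~\eq{8}, so the entire content of the lemma is the compactness claim. The plan is to exploit the geometric separation between the source surface $\partial B_\Gd(\B0)$ and each of the evaluation surfaces $\partial B_{a'}(\Bx_0)$ and $\partial B_{R'}(\B0)$ guaranteed by the parameter conditions~\eq{par-cond}, and then to conclude that each of $K_1$ and $K_2$ is an integral operator with a continuous (in fact $C^\infty$) kernel on a compact product surface, which is known to be Hilbert--Schmidt and therefore compact.

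The first step is to note that the constraints $|\Bx_0|>a'+\Gd$ and $R'>|\Bx_0|+a'>\Gd$ give
\[
\dist(\partial B_{a'}(\Bx_0),\partial B_\Gd(\B0))\ge |\Bx_0|-a'-\Gd>0,\qquad \dist(\partial B_{R'}(\B0),\partial B_\Gd(\B0))=R'-\Gd>0.
\]
Since the fundamental solution $\Phi(\Bx,\By)$ in~\eq{9} is $C^\infty$ off the diagonal, the kernels $(\Bx,\By)\mapsto \partial\Phi(\Bx,\By)/\partial\BGv_{\By}$ on $\partial B_{a'}(\Bx_0)\times\partial B_\Gd(\B0)$ and $(\Bz,\By)\mapsto \partial\Phi(\Bz,\By)/\partial\BGv_{\By}$ on $\partial B_{R'}(\B0)\times\partial B_\Gd(\B0)$ are smooth, hence bounded and uniformly continuous on those compact product surfaces.

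The second step is to invoke the Hilbert--Schmidt criterion: a kernel that is continuous on a compact product is square-integrable there, so each $K_i$ is a Hilbert--Schmidt, and in particular compact, operator from $L^2(\partial B_\Gd(\B0))$ into $L^2$ of the corresponding target sphere. Alternatively, one may apply Arzel\`a--Ascoli to see that the image of a bounded set in $L^2(\partial B_\Gd(\B0))$ under $K_i$ is uniformly bounded and equicontinuous in $C$ of the target sphere, and then use the continuous embedding of $C$ into $L^2$ on a compact manifold. Since $\Xi$ is the direct product of the two target $L^2$ spaces and $K=(K_1,K_2)$, compactness of $K$ follows from the compactness of each component by a standard diagonal extraction. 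I do not anticipate a real obstacle; the only point requiring care is the geometric bookkeeping that $\Gd, a', R', \Bx_0$ really do yield positive separation on both product surfaces, which is precisely what~\eq{par-cond} is designed to ensure.
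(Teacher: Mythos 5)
Your proof is correct, but it takes a different route from the paper's. You identify the key fact — the positive separation of $\partial B_\Gd(\B0)$ from both $\partial B_{a'}(\Bx_0)$ and $\partial B_{R'}(\B0)$ makes each kernel smooth and bounded on a compact product surface — and then conclude compactness at one stroke via the Hilbert--Schmidt criterion (or, alternatively, Arzel\`a--Ascoli). The paper instead runs an elementary sequential argument: it takes a bounded sequence $\{u_n\}$ in $L^2(\partial B_\Gd(\B0))$, extracts a weakly convergent subsequence $u_n\rightharpoonup u$, uses continuity of the kernel in $\By$ to get pointwise convergence $K_iu_n(\Bx)\to K_iu(\Bx)$ on each target sphere, bounds $|K_i(u_n-u)(\Bx)|$ uniformly by Cauchy--Schwarz together with the same distance estimates $|\Bx_0|-a'-\Gd>0$ and $R'-\Gd>0$ that you use, and then applies dominated convergence to upgrade to strong $L^2$ convergence of $Ku_n$. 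Both arguments rest on exactly the same geometric input; yours is shorter and invokes a standard named criterion, while the paper's is self-contained and avoids appealing to the Hilbert--Schmidt theorem. One cosmetic remark: for the finite direct sum $K=(K_1,K_2)$ no ``diagonal extraction'' is needed — compactness of each component already gives compactness of the pair by passing to a subsequence twice (or simply by noting that a finite direct sum of compact operators is compact).
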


Let us introduce further the adjoint operator of $K$, i.e., the operator
$K^*:\Xi\rightarrow L^2(\partial B_\Gd)$ defined through the relation,
\begin{equation}\label{EQ:K*}
    (Kv,u)_{\Xi}=(v,K^*u)_{L^2(\partial B_\Gd(\B0))},\ \forall u\in\Xi, v\in L^2(\partial B_\Gd(\B0))
\end{equation}
where $(\cdot,\cdot)_\Xi$ is the scalar product on $\Xi$ defined in~\eq{EQ:IP}
and  $(\cdot,\cdot)_{L^2(\partial B_\Gd(\B0))}$ denotes the usual scalar product
in $L^2(\partial B_\Gd(\B0))$. We check, by simple change of variables and algebraic
manipulations, that the adjoint operator $K^*$ is given by,
\beq
\displaystyle K^*u(\Bx)=\int_{\partial
B_{a'}(\Bx_0)}u_1(\By)\frac{\partial \Phi(\Bx,\By)}{\partial
\BGv_{\Bx}}ds_{\By}+\int_{\partial B_{R'}(\B0)}u_2(\By)\frac{\partial
\Phi(\Bx,\By)}{\partial \BGv_{\Bx}}ds_{\By}
\eeq{10}
for any $u=(u_1,u_2)\in\Xi$ and $\Bx\in\partial B_{\Gd}(\B0)$, with
$\displaystyle \BGv_{\Bx}=\frac{\Bx}{|\Bx|}=\frac{\Bx}{\Gd}$.

From the compactness and linearity of $K$ as given in Lemma~\ref{lemma-2},
we conclude that the adjoint operator $K^*$ is compact as well.
Furthermore, let us denote by $\Ker(K^*)$ the kernel (i.e., null space) of $K^*$.
Then we have the following result.
\begin{Pro}\label{Pro-1}
 If $\psi=(\psi_1,\psi_2)\in \Ker(K^*)$ then $\psi\equiv(0,0)$ in $\Xi$.
\end{Pro}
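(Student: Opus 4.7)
The plan is to recognize $K^*\psi$ as the Neumann trace on $\partial B_\Gd(\B0)$ of a combined single layer potential, and then extract $\psi_1$ and $\psi_2$ from the jump formulas after propagating the vanishing data by unique continuation. Accordingly, I set
\[
W(\Bx) := \int_{\partial B_{a'}(\Bx_0)} \psi_1(\By)\,\Phi(\Bx,\By)\,ds_\By + \int_{\partial B_{R'}(\B0)} \psi_2(\By)\,\Phi(\Bx,\By)\,ds_\By,
\]
so that $K^*\psi(\Bx)=\partial_{\BGv_\Bx} W(\Bx)$ for $\Bx\in\partial B_\Gd(\B0)$. Because $\overline{B}_\Gd(\B0)$ is disjoint from both integration surfaces by \eqref{par-cond}, $W$ is harmonic in $B_\Gd(\B0)$, and $\psi\in\Ker(K^*)$ says its Neumann trace there vanishes; uniqueness (up to constants) for the interior Neumann problem therefore gives $W\equiv c_1$ on $B_\Gd(\B0)$ for some constant $c_1$.

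Next I would propagate this to the whole of $\overline{B}_{R'}(\B0)$. The open connected set $B_{R'}(\B0)\setminus\overline{B}_{a'}(\Bx_0)$ contains $B_\Gd(\B0)$ by \eqref{par-cond} and supports $W$ as a real-analytic harmonic function, so the identity principle gives $W\equiv c_1$ on it. By continuity of single layer potentials across their source surface, $W=c_1$ on $\partial B_{a'}(\Bx_0)$, and since $W$ is harmonic in $B_{a'}(\Bx_0)$ (the second layer is smooth in a neighborhood of $\overline{B}_{a'}(\Bx_0)$ because $\partial B_{R'}(\B0)$ lies outside it), uniqueness of the interior Dirichlet problem extends $W\equiv c_1$ to $\overline{B}_{a'}(\Bx_0)$, and hence to all of $\overline{B}_{R'}(\B0)$.

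With $W\equiv c_1$ on $\overline{B}_{R'}(\B0)$, I would use the classical jump relations to conclude. Across $\partial B_{a'}(\Bx_0)$ the second single layer is smooth, so the jump of the normal derivative of $W$ there equals the jump of the first single layer, which by the jump formula is $\mp\psi_1$; since $W$ is constant on both sides of $\partial B_{a'}(\Bx_0)$ the jump vanishes, yielding $\psi_1\equiv 0$. In the exterior $\RR^d\setminus\overline{B}_{R'}(\B0)$, $W$ then reduces to the single layer on $\partial B_{R'}(\B0)$, harmonic with boundary value $c_1$ and with the far-field behaviour of such a potential. Using the exterior uniqueness from Lemma~\ref{lemma-1} (radiation condition in $d=3$, boundedness in $d=2$), one determines $W$ outside explicitly, and the jump computation across $\partial B_{R'}(\B0)$ ties $\psi_2$ to $c_1$; matching this exterior expression with the single-layer far-field representation determined by $\int\psi_2$ then forces $c_1=0$, so that $\psi_2\equiv 0$.

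The most delicate step will be the last one, closing the exterior analysis to conclude $c_1=0$ in both dimensions simultaneously. In $d=2$ one must exploit that a single layer potential is bounded at infinity precisely when its total density vanishes, while in $d=3$ one has to align the $1/|\Bx|$ decay prescribed by the exterior harmonic extension $c_1R'/|\Bx|$ with the leading-order coefficient of the single layer, $(4\pi|\Bx|)^{-1}\int\psi_2$. Everywhere else the argument is a routine combination of interior Neumann and Dirichlet uniqueness, unique continuation for harmonic functions, and the standard jump formula for single layer potentials.
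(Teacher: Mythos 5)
Your overall strategy is the same as the paper's: form the combined single layer potential $w$, read $K^*\psi=0$ as vanishing Neumann data on $\partial B_\Gd(\B0)$, propagate the constant by unique continuation and interior Dirichlet uniqueness, and peel off $\psi_1$, $\psi_2$ with the jump relations. Everything through $\psi_1\equiv 0$ and $w\equiv c_1$ on $\overline{B}_{R'}(\B0)$ matches the paper and is fine. The genuine gap is exactly where you flagged it: the exterior step does not close. In $d=3$ the matching you propose is vacuous: if $w\equiv c_1$ on $\partial B_{R'}(\B0)$, the exterior harmonic extension is $c_1R'/|\Bx|$, the jump relation gives $\psi_2\equiv c_1/R'$, and the single-layer far field $(4\pi|\Bx|)^{-1}\int\psi_2=c_1R'/|\Bx|$ agrees with this for \emph{every} $c_1$, so no contradiction forces $c_1=0$. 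In $d=2$ you may not invoke boundedness of $w$ at infinity: $w$ is a single layer whose logarithmic growth is governed precisely by $\int\psi_2$, the quantity you are trying to show vanishes, so assuming boundedness begs the question.

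Moreover the gap cannot be repaired, because the statement as written is false: $\psi=(0,1)$ lies in $\Ker(K^*)$. Indeed, for this $\psi$ one has $w(\Bx)=\int_{\partial B_{R'}(\B0)}\Phi(\Bx,\By)\,ds_\By$, the single layer of constant density on a sphere, which is constant throughout $B_{R'}(\B0)$ (Newton's shell theorem in $d=3$, and the analogous classical identity in $d=2$), hence has vanishing normal derivative on $\partial B_\Gd(\B0)$, i.e.\ $K^*\psi=0$. Equivalently, $(Kv,(0,1))_\Xi=\int_{\partial B_\Gd(\B0)}v(\By)\,\partial_{\BGv_\By}\bigl(\int_{\partial B_{R'}(\B0)}\Phi(\Bz,\By)\,ds_\Bz\bigr)ds_\By=0$ for every $v$, since the inner integral is constant on $B_{R'}(\B0)$; so $(0,1)\perp U$. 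The paper's own proof breaks down at the same place: \eq{19} is Green's identity for the \emph{interior trace} of $\partial w/\partial\BGv$ (trivially zero since $w$ is constant inside), whereas \eq{20} computes the \emph{direct, principal value} of $\partial w/\partial\BGv$ on the boundary; these differ by $\tfrac12\psi_2$, and conflating them is what produces the unjustified conclusion $\int\psi_2=0$ in \eq{21}. The correct statement is that $\Ker(K^*)$ contains the span of $(0,1)$, so that $\overline{U}\subset\{(v_1,v_2)\in\Xi:\int_{\partial B_{R'}(\B0)}v_2=0\}$; a true proposition requires quotienting out this one-dimensional obstruction or restricting the admissible exterior targets $v_2$ accordingly.
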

\begin{proof}
Let $\psi\in \Ker(K^*)$ and define \beq w(\Bx)=\int_{\partial
B_{a'}(\Bx_0)}\psi_1(\By)\Phi(\Bx,\By)ds_{\By}+\int_{\partial
B_{R'}(\B0)}\psi_2(\By)\Phi(\Bx,\By)ds_{\By}, \mbox{ for
}\Bx\in\RR^d \eeq{11} where the integrals exist as improper
integrals for $\Bx\in \partial B_{a'}(\Bx_0)\cup\partial
B_{R'}(\B0)$. From $K^*\psi=0$ and \eq{10} we have that $w$
satisfies the Laplace equation \beq \left\{\vspace{0.15cm}
\begin{array}{ll}\GD w=0, & \mbox{ in }
B_\Gd(\B0) \vspace{0.15cm}\\
\displaystyle\frac{\partial w}{\partial\BGv_{\Bx}}=0,& \mbox{ on
}\partial B_\Gd(\B0)
\end{array}\right.
\eeq{12}
We then conclude that
\beq w=\mbox{constant} \mbox{ in }B_\Gd(\B0)
\eeq{12'}
We denote this constant by $L$, i.e., $w=\mbox{L}$ in $B_\Gd(\B0)$.
Then, because by definition $w$ is harmonic in $B_{R'}(\B0)\setminus{\bar{B}}_{a'}(\Bx_0)$, from the unique continuation principle, we conclude that
\beq
w=L \mbox{ in } B_{R'}(\B0)\setminus{\bar{B}}_{a'}(\Bx_0)
\eeq{14}
The next relations for $w$ are in fact the classical jump conditions
for the single layer potentials with $L^2$ densities (see
\cite{CoKr-Book98} and references therein). We have,
\beqa
&&\displaystyle\lim_{h\rightarrow +0}\int_{\partial
B_{a'}(\Bx_0)}|w(\Bx\pm
h\BGv_{\Bx})-w(\Bx)|^2ds_{\Bx}=0\label{15-1}\\
&&\nonumber\\
&&\displaystyle\lim_{h\rightarrow +0}\int_{\partial
B_{R'}(\B0)}|w(\Bx\pm h\BGv_{\Bx})-w(\Bx)|^2ds_{\Bx}=0 \label{15-2}\\
&&\nonumber\\
&&\displaystyle\lim_{h\rightarrow +0}\int_{\partial
B_{a'}(\Bx_0)}\left|2\frac{\partial w}{\partial\BGv_{\Bx}}(\Bx\pm
h\BGv_{\Bx})-2\frac{\partial
w}{\partial\BGv_{\Bx}}(\Bx)\pm\psi_1(\Bx)\right|^2ds_{\Bx}=0\label{15-3}\\
&&\nonumber\\
&&\displaystyle\lim_{h\rightarrow +0}\int_{\partial
B_{R'}(\B0)}\left|2\frac{\partial w}{\partial\BGv_{\Bx}}(\Bx\pm
h\BGv_{\Bx})-2\frac{\partial
w}{\partial\BGv_{\Bx}}(\Bx)\pm\psi_2(\Bx)\right|^2ds_{\Bx}=0
\eeqa{15-4} where $\BGv_\Bx=\BGv(\Bx)$ denotes the exterior normal
to $\partial B_{R'}(\B0)$ and $B_{a'}(\Bx_0)$ respectively and all
the integral of the normal derivatives of $w$ exists as improper
integrals. From \eq{14}, \eq{15-1} and \eq{15-2} we obtain that
\beq w=L\mbox{ on } \partial B_{R'}(\B0)\cup\partial B_{a'}(\Bx_0)
\eeq{16} Next note that by definition $w$ is harmonic in
$B_{a'}(\Bx_0)$. Then, uniqueness of the interior Dirichlet
problem for $w$ on $B_{a'}(\Bx_0)$ and \eq{16} implies \beq w=L
\mbox{ in } \bar{B}_{a'}(\Bx_0) \eeq{17} From \eq{14}, \eq{17},
and the two jump relations \eq{15-3}, we obtain that \beq \psi_1=0
\mbox{ on } \partial B_{a'}(\Bx_0) \eeq{17'} Equation \eq{17'}
used in the definition of $w$ given at \eq{11}, implies \beq
w(\Bx)=\int_{\partial
B_{R'}(\B0)}\psi_2(\By)\Phi(\Bx,\By)ds_{\By}, \mbox{ for
}\Bx\in\RR^d \eeq{17''} Next, relations \eq{14}, \eq{16}, and
\eq{17} imply that \beq
    w=L \mbox{ in } {\bar B}_{R'}(\B0) .
\eeq{18}
Let us now observe that Green's theorem applied to $w$ in
$ B_{R'}(\B0)$ gives
\beq
    \int_{\partial B_{R'}(\B0)}\frac{\partial w}{\partial \BGv_\Bx} ds_\Bx=0.
\eeq{19}
On the other hand, from the interior jump condition given in \eq{15-4} together with \eq{18}
we have that
\beq
\displaystyle \frac{\partial w}{\partial \BGv_\Bx} = -\frac{1}{2}\psi_2 \mbox{ a.e. on }
\partial B_{R'}(\B0)
\eeq{20} From \eq{19} and \eq{20} we deduce \beq \displaystyle
\int_{\partial B_{R'}(\B0)}\psi_2(\Bx)ds_\Bx=0 . \eeq{21} Observe
that \eq{21} guarantees the bounded behavior of $w$ at infinity in
two dimensions while it is well known that $w$ will decay to zero
at infinity in three dimensions. Then, the classical
representation result for smooth functions, which are harmonic in
the exterior of a given smooth region and bounded at infinity (see
\cite{CoKr-Book98}), implies \beq
    w(\Bx)=w_{\infty}+\int_{\partial B_{R'}(\B0)}\left(w(\By)\frac{\partial \Phi(\Bx,\By)}{\partial\BGv_\By}-\frac{\partial
w}{\partial\BGv_\By}(\By) \Phi(\Bx,\By)\right)ds_{\By}
\eeq{22}
for all $\Bx\in\RR^d\setminus B_{R'}(\B0)$ and for some constant $w_\infty$
which depends only on the dimension. Using \eq{16} and \eq{20} in
\eq{22} we obtain
\beqa
w(\Bx)&=&w_\infty + L\int_{\partial B_{R'}(\B0)}\frac{\partial \Phi(\Bx,\By)}{\partial\BGv_\By}ds_{\By}+
\frac{1}{2}\int_{\partial B_{R'}(\B0)}\psi_2(\By)\Phi(\Bx,\By)ds_{\By}\nonumber\\
&&\nonumber\\
&=& w_\infty+\frac{1}{2}w(\Bx)\nonumber\\
&&\nonumber\\
&=& 2w_\infty \mbox{ for }\Bx\in\RR^d\setminus {\overline
B}_{R'}(\B0) \eeqa{23} where we used \eq{11} for the last integral
in the first line of \eq{23}. Finally, \eq{18} and \eq{23}
together with the pair of jump conditions given at \eq{15-4} imply
that \beq \psi_2=0 \mbox{ a.e. on }
\partial B_{R'}(\B0) . \eeq{24} The statement of the Proposition
follows from \eq{17'} and \eq{24}.
\end{proof}

Before presenting the main result of this work, let us introduce the following space of functions
\[
    U \equiv K(C(\partial B_\Gd(\B0)) .
\]
It is clear that $U$ is a subspace of $\Xi$. Moreover we have,
\begin{lemma}\label{LMMA:U}
    The set $U\subset\Xi$ is dense in $\Xi$.
\end{lemma}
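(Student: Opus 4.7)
The plan is to use the standard Hilbert space duality: a subspace is dense in $\Xi$ if and only if its orthogonal complement is trivial. So I would take an arbitrary element $\psi=(\psi_1,\psi_2)\in\Xi$ with $\psi\perp U$, and try to show $\psi=0$.

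First, by definition of $U$, the assumption $\psi\perp U$ reads $(Ku,\psi)_{\Xi}=0$ for every $u\in C(\partial B_\delta(\B0))$. Using the adjoint relation \eq{EQ:K*}, this is equivalent to
\[
    (u,K^*\psi)_{L^2(\partial B_\delta(\B0))}=0 \quad\text{for all } u\in C(\partial B_\delta(\B0)).
\]
Since $C(\partial B_\delta(\B0))$ is dense in $L^2(\partial B_\delta(\B0))$, I would then conclude $K^*\psi=0$ as an element of $L^2(\partial B_\delta(\B0))$, i.e.\ $\psi\in\Ker(K^*)$.

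At this point Proposition~\ref{Pro-1} immediately gives $\psi\equiv(0,0)$ in $\Xi$. Therefore $U^\perp=\{0\}$ in the Hilbert space $\Xi$, which is equivalent to $\overline{U}=\Xi$, proving density.

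I expect no serious obstacle: the only small point worth being explicit about is that even though $K$ is originally defined on all of $L^2(\partial B_\delta(\B0))$, here we are restricting it to the dense subspace $C(\partial B_\delta(\B0))$, and the density of continuous functions in $L^2$ is what lets us pass from the orthogonality of $Ku$'s to the conclusion $K^*\psi=0$. The substantive input is Proposition~\ref{Pro-1}, which was established via the single-layer jump relations; the present lemma is essentially its functional-analytic translation.
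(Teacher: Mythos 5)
Your argument is correct and is essentially identical to the paper's own proof: both reduce density to showing $U^\perp=\{0\}$ via the adjoint identity \eq{EQ:K*}, identify $U^\perp$ with $\Ker(K^*)$ using the density of $C(\partial B_\Gd(\B0))$ in $L^2(\partial B_\Gd(\B0))$, and then invoke Proposition~\ref{Pro-1}. Your version is if anything slightly more careful, since you make explicit the density step that the paper leaves implicit in its chain of equivalences.
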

\begin{proof}
    We first observe that the subspace $U\subset \Xi$ satisfies
\beq
    \overline{U} =\left(U^\perp\right)^\perp
\eeq{26} where here and further in the proof, for a given set
$M\subset\Xi$, $\overline{M}$ and $M^\perp$ denote its closure and
orthogonal complement respectively in the $L^2$ topology generated
on $\Xi$ by the scalar product defined at \eq{EQ:IP}. Property
\eq{26} is classic for subspaces in a Hilbert space (see
\cite{B}). On the other hand we also have that \beq
    U^\perp=\Ker(K^*)
\eeq{27}
Indeed let $\xi=(\xi_1,\xi_2)\in U^\perp$. Then, for all $\ph\in C(\partial B_\Gd(\B0))$  we have,
\beqa
    0=(K\ph,\xi)_{\Xi}&\Leftrightarrow&(\ph,K^*\xi)_{L^2(\partial B_\Gd(\B0))}=0\Leftrightarrow\nonumber\\
&\Leftrightarrow& K^*\xi=0\Leftrightarrow \xi\in
\Ker(K^*) .
\eeqa{28}
Properties \eq{26} and \eq{27} imply that
\beq
{\overline U}= \Ker(K^*)^\perp
\eeq{29}
Proposition \ref{Pro-1} together with \eq{29} imply the density of $U$ in $\Xi$.
\end{proof}

We are now in the position to state and prove the main result of the paper.
\begin{Thm}
\label{thm-1}
 Let $a,c,a',R',R$ be given as in (\ref{par-cond}).
 Let $v=(v_1,v_2)\in C({\bar B}_{a'}(\Bx_0))\times C(\RR^d\setminus B_{R'}(\B0))$
 be such that $v_1$ is harmonic in $B_{a'}(\Bx_0)$ and $v_2$ is harmonic in $\RR^d\setminus {\bar B}_{R'}(\B0)$.
Define the double layer potential $\CD$ with density $\varphi\in L^2(\partial B_\Gd(\B0))$ as,
 $$ \displaystyle\CD \varphi(\Bx)=\int_{\partial
 B_\Gd(\B0)}\varphi(\By)\frac{\partial\Phi(\Bx,\By)}{\partial\nu_{\By}}ds_\By, \mbox{ for }\Bx\in \RR^d\setminus \bar{B}_\Gd(\B0)$$
 Then $\CD:L^2(\partial B_\Gd(\B0))\rightarrow C(\RR^d\setminus {\overline B}_{a'-a+\Gd}(\B0))$
 is a continuous operator between $L^2(\partial B_\Gd(\B0))$ and $C(\RR^d\setminus
 {\overline B}_{a'-a+\Gd}(\B0))$ endowed with their natural topologies. Moreover, there exists a sequence $\{v_n\}\subset C(\partial B_\Gd(\B0))$ such that
 $$\CD v_n\rightarrow v_1 \mbox{ strongly in }C(\bar{B}_a(\Bx_0)), \mbox {and }, \CD v_n\rightarrow v_2 \mbox{ strongly in } C(\RR^d\setminus
 B_R(\B0))$$ with respect to the uniform topology of $C(\bar{B}_a(\Bx_0))$ and $C(\RR^d\setminus
 B_R(\B0))$.
\end{Thm}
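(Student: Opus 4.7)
My strategy is to reduce the uniform approximation in $\bar B_a(\Bx_0)$ and in $\RR^d\setminus B_R(\B0)$ to $L^2$-approximation on the intermediate spheres $\partial B_{a'}(\Bx_0)$ and $\partial B_{R'}(\B0)$, and then to invoke the density result of Lemma~\ref{LMMA:U}. The passage from $L^2$-convergence on these spheres to uniform convergence strictly inside (resp.\ outside) is precisely what the two parts of Lemma~\ref{lemma-1} deliver.

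I would first dispose of the continuity statement for $\CD$. For $\Bx$ with $|\Bx|\geq a'-a+\Gd$ one has $|\Bx-\By|\geq a'-a>0$ uniformly in $\By\in\partial B_\Gd(\B0)$, so the kernel $\partial\Phi(\Bx,\By)/\partial\BGv_{\By}$ is bounded on $\{\Bx\}\times\partial B_\Gd(\B0)$ and smooth in $\Bx$. Cauchy--Schwarz then gives $|\CD\varphi(\Bx)|\leq C\|\varphi\|_{L^2(\partial B_\Gd(\B0))}$ with $C$ independent of $\Bx$ in that range, and dominated convergence yields continuity in $\Bx$; thus $\CD\colon L^2(\partial B_\Gd(\B0))\to C(\RR^d\setminus\bar B_{a'-a+\Gd}(\B0))$ is bounded. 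The condition $|\Bx_0|>a'+\Gd$ from (\ref{par-cond}) ensures $\bar B_a(\Bx_0)\subset\RR^d\setminus\bar B_{a'-a+\Gd}(\B0)$, and obviously $\RR^d\setminus B_R(\B0)$ is contained in the same set, so $\CD v_n$ is well defined on both target regions. A direct comparison of the definition of $\CD$ with (\ref{8}) shows that $Kv=(\CD v|_{\partial B_{a'}(\Bx_0)},\CD v|_{\partial B_{R'}(\B0)})$ for every $v\in C(\partial B_\Gd(\B0))$.

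For the main claim, set $\psi:=(v_1|_{\partial B_{a'}(\Bx_0)},v_2|_{\partial B_{R'}(\B0)})\in\Xi$, which is legitimate by the continuity hypotheses on $v_1,v_2$. Lemma~\ref{LMMA:U} supplies a sequence $v_n\in C(\partial B_\Gd(\B0))$ with $Kv_n\to\psi$ in $\Xi$, that is,
\[
    \|\CD v_n-v_1\|_{L^2(\partial B_{a'}(\Bx_0))}\to 0\quad\text{and}\quad \|\CD v_n-v_2\|_{L^2(\partial B_{R'}(\B0))}\to 0.
\]
Since $B_{a'}(\Bx_0)$ is disjoint from $\bar B_\Gd(\B0)$ by (\ref{par-cond}), the function $\CD v_n-v_1$ is harmonic there and continuous on $\bar B_{a'}(\Bx_0)$; applying Lemma~\ref{lemma-1}(i) with $\By_0=\Bx_0$, $R_*=a'$, $R_1=a$, and bounding $L^1\leq\sqrt{|\partial B_{a'}(\Bx_0)|}\,L^2$ on the compact sphere, one obtains $\|\CD v_n-v_1\|_{C(\bar B_a(\Bx_0))}\to 0$. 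In the exterior, $\CD v_n-v_2$ is harmonic in $\RR^d\setminus\bar B_{R'}(\B0)$; the double layer $\CD v_n$ decays like $|\Bx|^{1-d}$, matching the $O(1)$/$o(1)$ radiation condition required by Lemma~\ref{lemma-1}(ii), so applying that lemma with $\By_0=\B0$, $R_*=R'$, $R_2=R$ gives $\|\CD v_n-v_2\|_{C(\RR^d\setminus B_R(\B0))}\to 0$.

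The main obstacle I anticipate is making the radiation condition for $\CD v_n-v_2$ rigorously available: the stated hypotheses do not explicitly pin down the behavior of $v_2$ at infinity, yet for $\|\CD v_n-v_2\|_{C(\RR^d\setminus B_R(\B0))}$ to be meaningful as a norm (let alone tend to zero) one needs $v_2$ to share the decay of $\CD v_n$, namely $v_2\to 0$ at infinity in dimension three and $v_2$ bounded in dimension two. This has to be either extracted from the implicit boundedness of $v_2$ on the unbounded set $\RR^d\setminus B_{R'}(\B0)$ or added as a standing assumption; once that is in hand the argument reduces to the clean three-step chain of density of $K(C(\partial B_\Gd(\B0)))$ in $\Xi$ via Lemma~\ref{LMMA:U}, the embedding $L^2\hookrightarrow L^1$ on each compact sphere, and the quantitative interior/exterior regularity of Lemma~\ref{lemma-1}.
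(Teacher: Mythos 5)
Your proof follows essentially the same route as the paper's: density of $K(C(\partial B_\Gd(\B0)))$ in $\Xi$ from Lemma~\ref{LMMA:U}, identification of $K_1v_n, K_2v_n$ with the restrictions of $\CD v_n$ to the intermediate spheres $\partial B_{a'}(\Bx_0)$ and $\partial B_{R'}(\B0)$, and Lemma~\ref{lemma-1} (together with $\|\cdot\|_{L^1}\le C\|\cdot\|_{L^2}$ on a compact sphere) to upgrade $L^2$ convergence there to uniform convergence on $\bar B_a(\Bx_0)$ and $\RR^d\setminus B_R(\B0)$. The radiation-condition point you flag for $v_2$ is a genuine subtlety, but the paper leaves it equally implicit (it invokes only ``the hypothesis on $v_1,v_2$''), so your treatment is, if anything, the more careful of the two.
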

\begin{proof}
    We first observe that $v\in\Xi$.
%In order to simply the exposition we will introduce $U\subset \Xi$ given by \beq U\doteq K(C(\partial B_\Gd))\eeq{25} We will first show that $U$ is dense in $\Xi$. Note that $U$ is a subspace of $\Xi$ and satisfies \beq {\bar U}=\left(U^\perp\right)^\perp\eeq{26} where here and further in the proof, for a given set $M\subset\Xi$, by ${\bar M}, M^\perp$ we denote its closure and orthogonal complement respectively in the $L^2$ topology generated on $\Xi$ by the scalar product defined at \eq{4}. Property \eq{26} is classic for subspaces in a Hilbert space (see \cite{B}). On the other hand we also have that \beq U^\perp=\Ker(K^*)\eeq{27} Indeed let $\xi=(\xi_1,\xi_2)\in U^\perp$. Then, for all $\ph\in C(\partial B_\Gd)$  we have, \beqa 0=(K\ph,\xi)_{\Xi}&\Leftrightarrow&(\ph,K^*\xi)_{L^2(\partial B_\Gd)}=0\Leftrightarrow\nonumber\\ &\Leftrightarrow& K^*\xi=0\Leftrightarrow \xi\in \Ker(K^*)\eeqa{28} and \eq{27} is proved. Properties \eq{26} and \eq{27} imply \beq {\bar U}= \Ker(K^*)^\perp\eeq{29} Proposition \ref{Pro-1} together with \eq{29} imply the density of $U$ in $\Xi$.
Then the definition of $U$ and Lemma~\ref{LMMA:U} imply that there exists a sequence $\{v_n\}\subset C(\partial B_\Gd(\B0))$ such that
\beq
K(v_{n})\rightarrow v \mbox{ strongly in }\Xi
\eeq{29'}
From the definition of the $\Xi$ topology and \eq{29'} we conclude that
\beqa
&&\Vert K_1v_{n}-v_1\Vert_{L^2(\partial B_{a'}(\Bx_0))}\rightarrow
 0\nonumber\\
 &&\label{30'}\\
&&\Vert K_2v_{n}-v_2\Vert_{L^2(\partial B_{R'}(\B0))}\rightarrow
 0\nonumber
\eeqa{30}
Observe that, by definition, $K_1v_{n}$ (resp. $K_2 v_{n}$) is the restriction
to $\partial B_{a'}(\Bx_0)$ (resp. $\partial B_{R'}(\B0)$) of $\CD v_{n}$ (resp. $\CD v_{n}$)
where $\CD$ was defined in the statement of the Theorem. From the properties of $\CD$,
the hypothesis on $v_1,v_2$ and the regularity results of
Lemma~\ref{lemma-1} we conclude that
\beqa &&\Vert
\CD v_{n}-v_1\Vert_{C(\bar{B}_{a}(\Bx_0))}\leq C_1\Vert K_1 v_{n}-v_1\Vert_{L^2(\partial B_{a'}(\Bx_0))}\nonumber\\
 &&\label{31}\\
&&\Vert \CD v_{n}-v_2\Vert_{C(\RR^d\setminus B_{R}(\B0))}\leq
C_2\Vert K_2v_n-v_2\Vert_{L^2(\partial
B_{R'}(\B0))}\nonumber\eeqa{31'} where we have also used the
properties of $a, a', R, R'$ and $\Bx_0$ stated at
(\ref{par-cond}). Finally from \eq{30'} and \eq{31} we obtain the
statement of the Theorem.
\end{proof}

%%%%%%%%%%%%%%%%%%%%%%%%%%%%%%%%%%%%%%%%%%%%%%%%%%%%%%%%%%%%%%%%%%
%%%%%%%%%%%%%%%%%%%%%%%%%%%%%%%%%%%%%%%%%%%%%%%%%%%%%%%%%%%%%%%%%%
\section{The minimal energy solution}
\label{SEC:MES}
%%%%%%%%%%%%%%%%%%%%%%%%%%%%%%%%%%%%%%%%%%%%%%%%%%%%%%%%%%%%%%%%%%
%%%%%%%%%%%%%%%%%%%%%%%%%%%%%%%%%%%%%%%%%%%%%%%%%%%%%%%%%%%%%%%%%%

Theorem \ref{thm-1} implies that there exist infinitely many
functions $h\in C(\partial B_\Gd(\B0))$ (resp. $g\in C(\partial
B_\Gd(\B0))$) as solutions to~\eq{2} in Formulation A
((resp.~\eqref{3}) in Formulation A$'$). Indeed, let $0<\eps\ll 1$
and $v=(v_1,v_2)\in C({\bar B}_{a'}(\Bx_0))\times C(\RR^d\setminus
B_{R'}(\B0))$ such that $v_1$ is harmonic in $B_{a'}(\Bx_0)$ and
$v_2$ is harmonic in $\RR^d\setminus {\bar B}_{R'}(\B0)$. Then,
using the regularity results of Lemma \ref{lemma-1} we observe
that any function $h\in C(\partial B_\Gd(\B0))$ satisfying
\begin{equation}\label{EQ:Approx-1}
\|Kh-v\|_\Xi \le \eps,
\end{equation}
where the $\|\cdot\|_\Xi$ is the natural norm induced by the inner
product defined in~\eqref{EQ:IP}, must be a solution for the
problem \eq{2}. This together with \eq{30} provides a sequence of
solutions for problem \eq{2}.

Now, we will prove how, for any desired level of accuracy $\eps$,
among the solutions of~\eqref{EQ:Approx-1}, there exists a unique
solution with minimal energy norm, i.e., with minimal
$L^2(\partial B_\delta(\B0))$ norm. We have the following result.
\begin{corollary}\label{cor-0}
Let $0<\eps\ll 1$ and $v\in \Xi$ be given. Then there exists a unique $h_0 \in L^2(\partial B_\Gd(\B0))$
solution of the following minimization problem,
\begin{equation}\label{min-1}
    \|h_0\|_{L^2(\partial B_\Gd(\B0))}=\min_{\|Kh-v\|_{\Xi}\le \eps} \|h\|_{L^2(\partial B_\Gd(\B0))}
\end{equation}
\end{corollary}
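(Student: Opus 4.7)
My plan is to recognize this as a standard projection problem in Hilbert space: we are minimizing $\|\cdot\|_{L^2(\partial B_\delta(\B0))}$ over the admissible set
\[
    F_\eps \equiv \{h\in L^2(\partial B_\delta(\B0)) : \|Kh-v\|_\Xi \le \eps\},
\]
which is the preimage of the closed ball $\overline{B}_\eps(v)\subset \Xi$ under the bounded affine map $h\mapsto Kh-v$. The unique minimizer will be the projection of $0$ onto $F_\eps$, and all the work reduces to checking that $F_\eps$ is a nonempty, closed, convex subset of the Hilbert space $L^2(\partial B_\delta(\B0))$.

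First I would verify convexity and closedness. Convexity follows from the linearity of $K$ and the triangle inequality on $\Xi$: if $h_1,h_2\in F_\eps$ and $t\in[0,1]$, then
\[
    \|K(th_1+(1-t)h_2)-v\|_\Xi \le t\|Kh_1-v\|_\Xi+(1-t)\|Kh_2-v\|_\Xi \le \eps.
\]
Closedness follows because $K$ is continuous (in fact compact by Lemma \ref{lemma-2}), so $h\mapsto \|Kh-v\|_\Xi$ is continuous on $L^2(\partial B_\delta(\B0))$ and $F_\eps$ is the preimage of $[0,\eps]$.

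Next I would argue nonemptiness, which is the only nontrivial step and uses the density result already established. By Lemma \ref{LMMA:U}, the range $U=K(C(\partial B_\delta(\B0)))$ is dense in $\Xi$, so there exists $\widetilde h\in C(\partial B_\delta(\B0))\subset L^2(\partial B_\delta(\B0))$ with $\|K\widetilde h-v\|_\Xi\le \eps$; hence $\widetilde h\in F_\eps$.

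Finally I would invoke the classical projection theorem: in a Hilbert space $H$, every nonempty closed convex set $F\subset H$ contains a unique element of minimum norm. Applied to $H=L^2(\partial B_\delta(\B0))$ and $F=F_\eps$, this yields the unique $h_0\in F_\eps$ with
\[
    \|h_0\|_{L^2(\partial B_\delta(\B0))} = \min_{h\in F_\eps} \|h\|_{L^2(\partial B_\delta(\B0))},
\]
which is exactly~\eqref{min-1}. There is no real obstacle here; the substantive content sits in Lemma \ref{LMMA:U} (used for nonemptiness), while the rest is a direct application of Hilbert-space geometry.
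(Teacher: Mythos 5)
Your proof is correct, but it follows a genuinely different route from the paper's. You reduce everything to the Hilbert-space projection theorem: the admissible set $F_\eps=\{h:\|Kh-v\|_\Xi\le\eps\}$ is convex (linearity of $K$ plus the triangle inequality), closed (preimage of $[0,\eps]$ under a continuous map), and nonempty (density of the range of $K$, via Lemma~\ref{LMMA:U}), so it contains a unique element of minimal norm. Note that your argument does not even need injectivity of $K$ for uniqueness, since the parallelogram law rules out two distinct minimizers in a convex set. The paper instead establishes that $K$ has dense range (from Proposition~\ref{Pro-1}, i.e.\ injectivity of $K^*$ --- essentially the same density input you use) and then cites the classical theory of minimum-norm solutions via Tikhonov regularization (Theorem~16.12 of \cite{Kress-Book99}). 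What the paper's route buys that yours does not is the constructive characterization of $h_0$ as the limit of solutions of the regularized normal equations $\alpha h_\alpha+K^*Kh_\alpha=K^*v$ subject to the discrepancy condition $\|Kh_\alpha-v\|_\Xi=\eps$, together with the additional regularity $h_0\in C(\partial B_\Gd(\B0))$; this characterization is what drives the numerical computations in Section~\ref{SEC:Num}. (If you wanted to recover that characterization from your argument, you would need the additional observation that when $\|v\|_\Xi>\eps$ the constraint is active at the minimizer, which is where the Lagrange-multiplier/Tikhonov formulation enters.) What your route buys is a shorter, self-contained existence-and-uniqueness proof that does not appeal to an external theorem.
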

\begin{proof}
From Proposition \ref{Pro-1} and classical linear operator theory
we have that the linear bounded operator $K: L^2(\partial
B_\Gd(\B0))\rightarrow \Xi$ has a dense range. This together with
the classical theory of minimum norm solutions based on the
Tikhonov regularization implies the statement of the Corollary
(see \cite{Kress-Book99}, Theorem 16.12). In fact the classical
theory implies that the solution $h_0$ of (\ref{min-1}) belongs
$C(\partial B_\Gd(\B0))$ and is the unique solution of
\begin{equation}\label{EQ:E-L}
    \alpha h_\alpha+K^*K h_\alpha=K^*v, \mbox{ with } \|Kh_\alpha-v \|_{\Xi}=\eps,
\end{equation}
as the regularization strength $\alpha$ goes to $0$.
\end{proof}

The next result is an immediate consequence of Theorem \ref{thm-1}. It proves the existence of a class of solutions
for the problem \eq{3}.
\begin{corollary}\label{cor-1}
 Let $u_0$ and $u_1$ be as in \eq{2} and consider $v=(u_1-u_0,0)$. Then there exist infinitely many functions
 $g\in C(\partial B_\Gd(\B0))$ such that ${\cal D}g=u$ with $u$ satisfying~\eq{3}. Moreover, there exists a unique
 function $g\in C(\partial B_\Gd(\B0))$ solution of \eq{3} with minimal  $L^2(\partial B_\Gd(\B0))$ norm.
\end{corollary}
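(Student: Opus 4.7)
The plan is to read both claims off from Theorem~\ref{thm-1} (existence of infinitely many solutions) and Corollary~\ref{cor-0} (uniqueness of the minimum $L^2$-norm solution) by specializing the target pair to $v := (v_1, v_2) = (u_1 - u_0,\, 0)$. To verify the hypotheses of Theorem~\ref{thm-1}, note that $v_2 \equiv 0$ is trivially harmonic on $\RR^d \setminus \bar B_{R'}(\B0)$, while $v_1 = u_1 - u_0$ is harmonic in $B_{a'}(\Bx_0)$: $u_1$ is harmonic in an open set containing $\bar B_a(\Bx_0)$, so by shrinking $a'$ toward $a$ if necessary it is harmonic on $B_{a'}(\Bx_0)$; and $u_0$ is harmonic on a neighborhood of $\bar B_{a'}(\Bx_0)$ in the smoothly extended sense implicit in Formulation~A$'$. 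Theorem~\ref{thm-1} then furnishes a sequence $\{g_n\} \subset C(\partial B_\Gd(\B0))$ such that $\CD g_n \to u_1 - u_0$ uniformly on $\bar B_a(\Bx_0)$ and $\CD g_n \to 0$ uniformly on $\RR^d \setminus B_R(\B0)$. For every index $n$ past a threshold the function $u := \CD g_n$ is harmonic in $\RR^d \setminus \bar B_\Gd(\B0)$, has continuous exterior trace on $\partial B_\Gd(\B0)$ (by the standard jump relations for double layer potentials with continuous density on a smooth surface), and satisfies both inequalities in~\eq{3}; the infinite tail of the sequence supplies infinitely many admissible densities.

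For the minimum-norm claim, I would invoke Corollary~\ref{cor-0} on the same datum $v = (u_1 - u_0,\, 0) \in \Xi$ with a contracted tolerance $\eps' := \eps/C_\star$, where $C_\star$ is the larger of the two constants appearing in Lemma~\ref{lemma-1} for the geometry $(\Gd, a, a', R, R')$. By that lemma, any $h \in L^2(\partial B_\Gd(\B0))$ with $\|Kh-v\|_\Xi \le \eps'$ automatically satisfies both $C$-norm constraints of~\eq{3} through $u := \CD h$; thus the $\Xi$-admissible ball embeds into the solution set of~\eq{3}. Corollary~\ref{cor-0} then supplies a unique $L^2$-minimizer $h_0$ over $\{h : \|Kh-v\|_\Xi \le \eps'\}$, and the Tikhonov characterization~\eq{EQ:E-L}, combined with the smoothness of the kernel $\partial\Phi(\Bx,\By)/\partial\BGv_\Bx$ on the well-separated product $\partial B_\Gd(\B0) \times (\partial B_{a'}(\Bx_0) \cup \partial B_{R'}(\B0))$, places $h_0$ in $C(\partial B_\Gd(\B0))$, giving the sought continuous density of minimal energy.

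The main subtle point, which I expect to be the chief obstacle, is the interpretation of ``unique minimum $L^2$-norm solution of~\eq{3}'' itself. The full admissible set for~\eq{3}, namely $S := \{h \in L^2(\partial B_\Gd(\B0)) : \CD h \text{ satisfies both constraints of~\eq{3}}\}$, is \emph{a priori} larger than the $\Xi$-ball used by Corollary~\ref{cor-0}, so one cannot directly equate the two minimizations. However, $S$ is closed and convex in $L^2(\partial B_\Gd(\B0))$ (closedness from the $L^2\to C$ continuity of $h\mapsto \CD h$ supplied by Lemma~\ref{lemma-1}, and convexity from the linearity of $\CD$) and it is nonempty by the first part, so the Hilbert-space projection theorem already gives a unique $L^2$-minimizer over $S$; the remaining work is to identify this minimizer with the $h_0$ produced by Corollary~\ref{cor-0} and to reconcile the $\Xi$-norm relaxation with the two $C$-norm constraints in~\eq{3}. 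This bookkeeping between the two formulations of the tolerance, and the use of the Tikhonov representation to extract the continuity of the minimizer, is where the care must be applied.
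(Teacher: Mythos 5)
Your argument follows the same route as the paper: both claims are read off from Theorem~\ref{thm-1} applied to $v=(u_1-u_0,0)$ (with $a'$ taken close enough to $a$ that $u_1$ stays harmonic on $B_{a'}(\Bx_0)$, exactly as in the paper) together with Corollary~\ref{cor-0}, and the tail of the approximating sequence $\{\CD g_n\}$ supplies the infinitely many solutions of \eq{3}. Your additional observations --- that the admissible boundary datum in \eq{3} is the continuous exterior trace of $\CD g_n$ rather than the density itself, and that Lemma~\ref{lemma-1} converts the $\Xi$-ball constraint $\|Kh-v\|_\Xi\le\eps'$ into the two sup-norm constraints of \eq{3} --- are correct and are points the paper passes over silently.

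The one place where your plan, taken literally, would not go through is the final ``identification'' step. The set $S$ of all densities whose double layer potential satisfies \eq{3} strictly contains the $\Xi$-ball $\{h:\|Kh-v\|_\Xi\le\eps'\}$ used in Corollary~\ref{cor-0}, so the projection-theorem minimizer over $S$ and the Tikhonov minimizer $h_0$ of \eq{min-1} are minimizers over \emph{different} convex sets and will in general not coincide (the former has the smaller norm); moreover the bare projection argument over $S$ yields only an $L^2$ minimizer and gives you no access to the Euler--Lagrange characterization \eq{EQ:E-L}, which is what delivers the continuity of $h_0$. The paper does not reconcile these two readings either: its proof simply interprets ``minimal $L^2$-norm solution of \eq{3}'' as the minimizer over the $\Xi$-constrained set of Corollary~\ref{cor-0}, i.e.\ over a distinguished subset of $S$ on which uniqueness and continuity are available. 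So the correct resolution is not to identify the two minimizers but to commit to the $\Xi$-ball formulation, as the paper implicitly does; with that reading your proof is complete and matches the paper's.
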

\begin{proof}
First observe that $v=(u_1-u_0,0)$ satisfies the hypothesis of
Theorem \ref{thm-1} for $a'>a$ satisfying (\ref{par-cond}) and
small enough so that $u_1$ remains harmonic on $B_{a'}(\Bx_0)$.
Thus we have that there exists a sequence
 $\{g_n\}\subset C(\partial B_\Gd(\B0))$ such that we have
\beqa &&\Vert \CD g_n+u_0\Vert_{C(\bar{B}_{a}(\Bx_0))}\rightarrow
 0\nonumber\\
 &&\label{32}\\
&&\Vert \CD g_n\Vert_{C(\RR^d\setminus B_{R}(\B0))}\rightarrow
 0\nonumber\eeqa{32'}
 where $\CD$ is as in Theorem \ref{thm-1}. Then, for $0<\eps\ll 1$ as in \eq{3}, we can choose $N$ such that for all $n\ge N$ we will have
 \beqa&&\Vert
\CD g_n+u_0\Vert_{C(\bar{B}_{a}(\Bx_0))}\leq \epsilon\nonumber\\
 &&\label{33}\\
&&\Vert \CD g_n\Vert_{C(\RR^d\setminus
B_{R}(\B0))}\leq\epsilon\nonumber \eeqa{33'} This implies that,
there exists an index $N$ such that for all $n\ge N$, functions of
the form $\CD g_n$ will be solutions of the problem \eq{3}. Next,
by using Corollary~\ref{cor-0} we obtain the existence of a
solution $g\in C(\partial B_\Gd(\B0))$ with minimal $L^2(\partial
B_\Gd(\B0))$ norm.
\end{proof}

\begin{Arem}\label{Rem1}
We observe that one can easily adapt the proof of Theorem
\ref{thm-1} to the general case stated in Question
~\ref{QUST:Main}, i.e., the case of finitely many mutually
disjoint balls of interest. Thus, following the same arguments
as before, one will obtain a class of solutions for
Question~\ref{QUST:Main} in this general context. Moreover, by
adapting the proof of Corollary~\ref{cor-0} to the general case of
of $N$ disjoint domains we could obtain the existence of a minimal
$L^2(\partial B_\Gd)$- norm solution for the problem.
\end{Arem}

\begin{Arem} We also mention that all the results 
in this paper readily extend to general simple connected domains with $C^2$ boundary but 
for the clarity of the exposition we chose to present the results only 
in the case of spherically shaped domains.

\end{Arem}

\begin{Arem}\label{Rem3}
It is well-known that both the interior and exterior Dirichlet
problems are stable with respect to boundary data. This means that
small perturbations on boundary data $h$ produce small
perturbations in the solution $u$ in Formulation A. This further
suggests that the inverse problem that we consider in this work,
however, is unstable. To find the source function $h$ that
generate desired field $v$, we need to invert a compact integral
operator. Such a problem is always an ill-posed
problem~\cite[Theorem 1.17]{Kirsch-Book96} and this is the main
reason for the consideration of minimal energy solution.
\end{Arem}

%Several other interesting applications are possible using our general strategy for the manipulation of fields.
%\begin{Arem}\label{Rem2}
%In the general context of Question~\ref{QUST:Main}, Theorem~\ref{thm-1} implies that in fact the active source (antenna) at $B_\Gd$ can be programmed to approximate a zero field in $\displaystyle\bigcup_{i=1}^N R_i$ and
%any desired scattering field in $R_0$, for example the field corresponding to a very large object, thus creating an illusion for an external observer. In a different application, one can program the active source (antenna) to approximate $N$ different desired fields in each of the regions $R_i$ for $i\in\{1,...,N\}$ while creating a zero field region in $R_0$ thus sending information in the regions of interests without being detected by an outside observer.
%\end{Arem}

%%%%%%%%%%%%%%%%%%%%%%%%%%%%%%%%%%%%%%%%%%%%%%%%%%%%%%%%%%%%%%%%%%
%%%%%%%%%%%%%%%%%%%%%%%%%%%%%%%%%%%%%%%%%%%%%%%%%%%%%%%%%%%%%%%%%%
\section{Numerical simulations}
\label{SEC:Num}
%%%%%%%%%%%%%%%%%%%%%%%%%%%%%%%%%%%%%%%%%%%%%%%%%%%%%%%%%%%%%%%%%%
%%%%%%%%%%%%%%%%%%%%%%%%%%%%%%%%%%%%%%%%%%%%%%%%%%%%%%%%%%%%%%%%%%

We now present some numerical results to demonstrate the ideas that we have developed. We consider both two-dimensional and three-dimensional cases. To simplify the visualization, we only present results with regions of interests being balls, although the numerical algorithms we developed can deal with regions of arbitrary shapes with boundaries regular enough. The scattering problem (more precisely, the integral operator $K$) is discretized by the Nystr\"om method, following the presentation in~\cite{CoKr-Book98}.

In the two-dimensional case, we consider Question~\ref{QUST:Main} with $N=2$, $\Gd=1$, 
$D_1=B_2(\Bx_1)$, $D_2=B_2(\Bx_2)$ and $D=B_{15}(\B0)$. The centers of $D_1$ and $D_2$ are $\Bx_1=(0,12)$ and $\Bx_2=(10,0)$ respectively. The fields are $u_1=\log\frac{1}{|\Bx|}$, $u_2=\frac{x}{|\Bx|^2}$ and $u_0=0$.
The accuracy parameter is $\eps=10^{-3}(\|u_1\|_{L^2(D_1)}+\|u_2\|_{L^2(D_2)}+\|u_0\|_{L^2(\partial D)})$. We show in the left plot of Fig.~\ref{FIG:2D} the minimal energy solution of the problem with the desired fields
given as above. The source function, supported on the unit circle,
is parameterized using the azimuth angle $\varphi\in[0,2\pi)$. The two middle plots of Fig.~\ref{FIG:2D} show the relative differences of the field that is generated by the minimal energy solution and the desired
field in region $D_1$ and $D_2$. It is clear from the plot that the solution
strategy works almost perfectly because the mismatch between the
desired field and the generated field is almost very small everywhere.
\begin{figure}[ht]
    \centering
    \includegraphics[trim=0cm 0cm 0cm 0cm,clip,angle=0,width=0.3\textwidth]{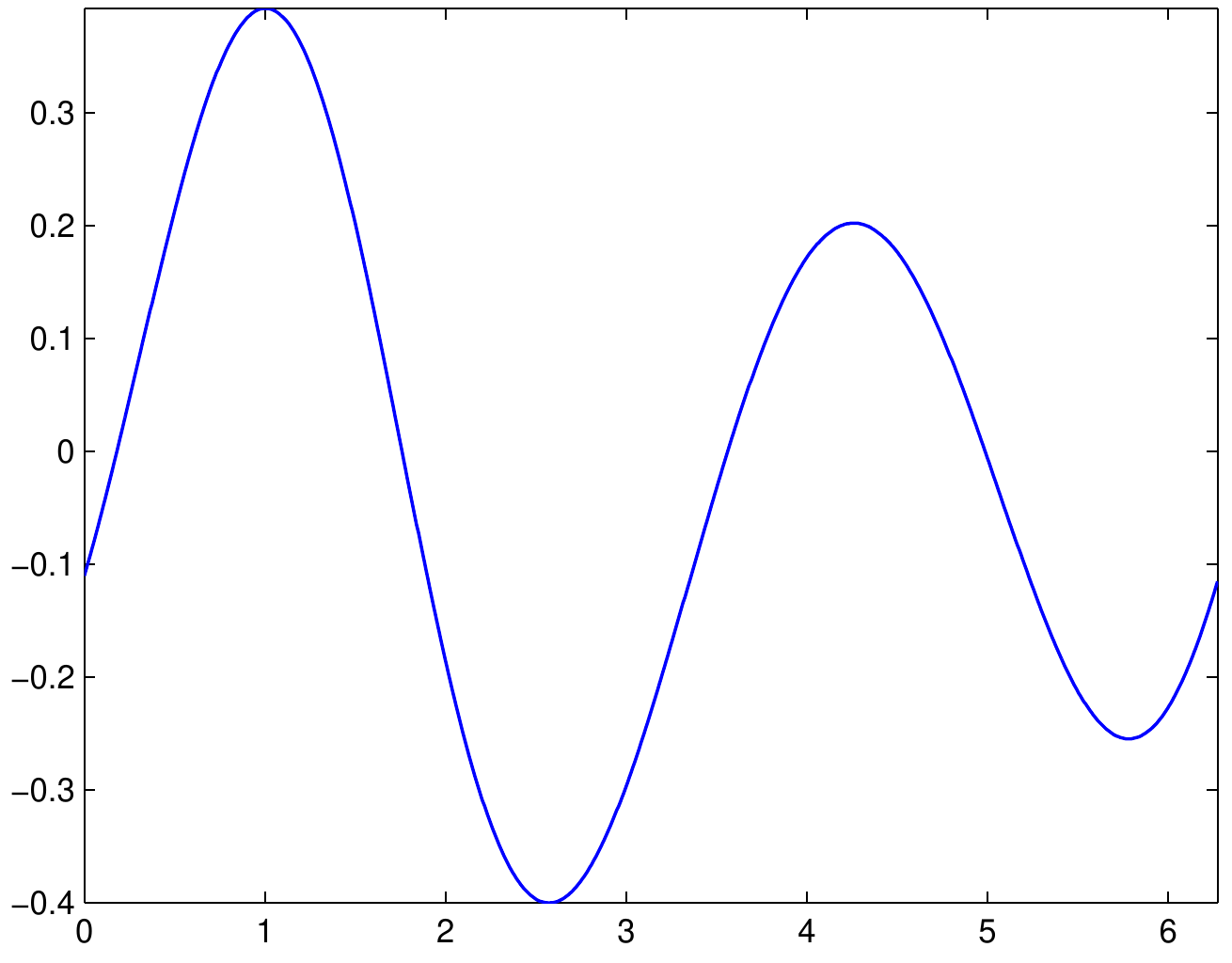}
    \includegraphics[trim=0cm 0cm 0cm 0cm,clip,angle=0,width=0.3\textwidth]{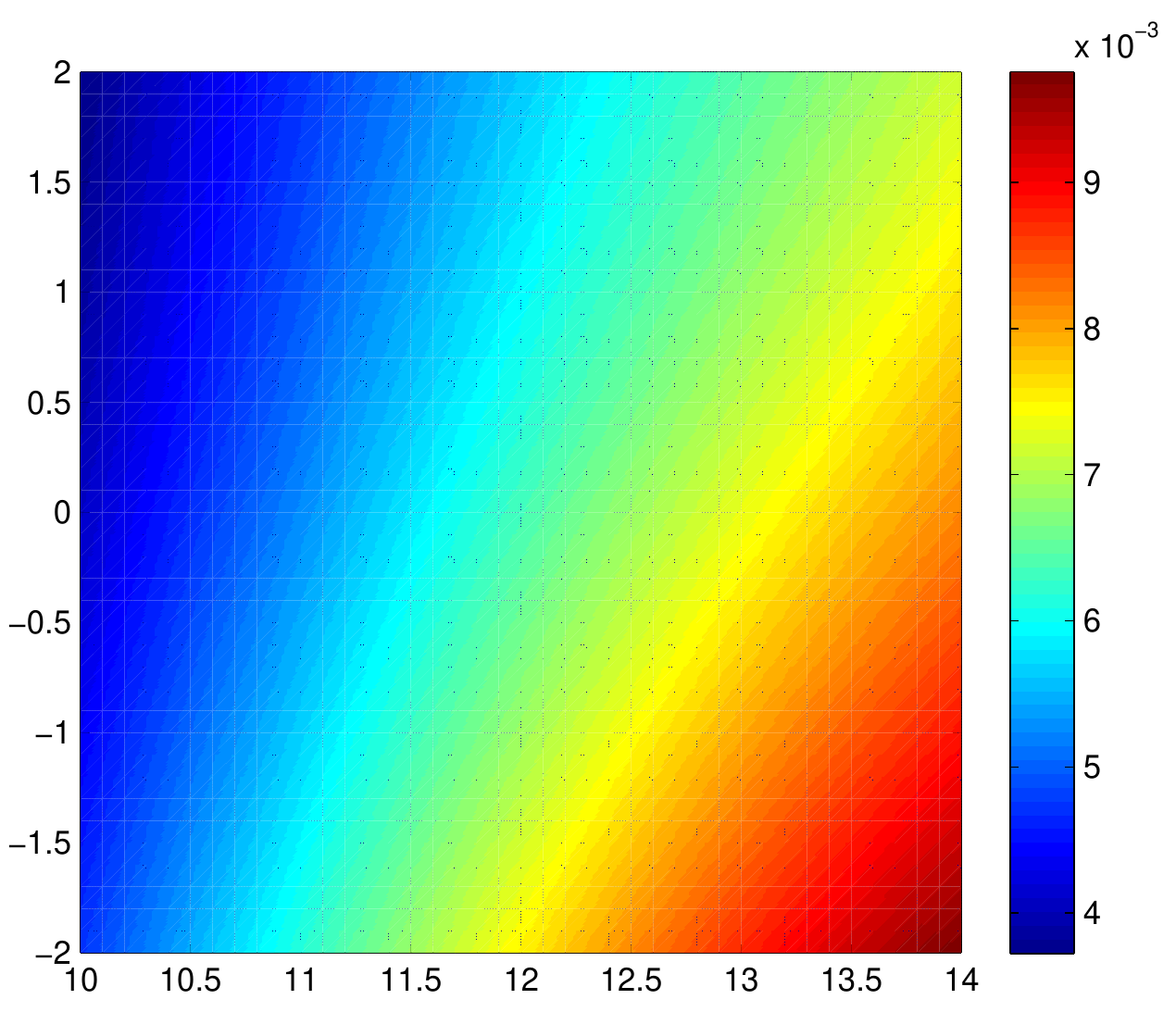}\\
    \includegraphics[trim=0cm 0cm 0cm 0cm,clip,angle=0,width=0.3\textwidth]{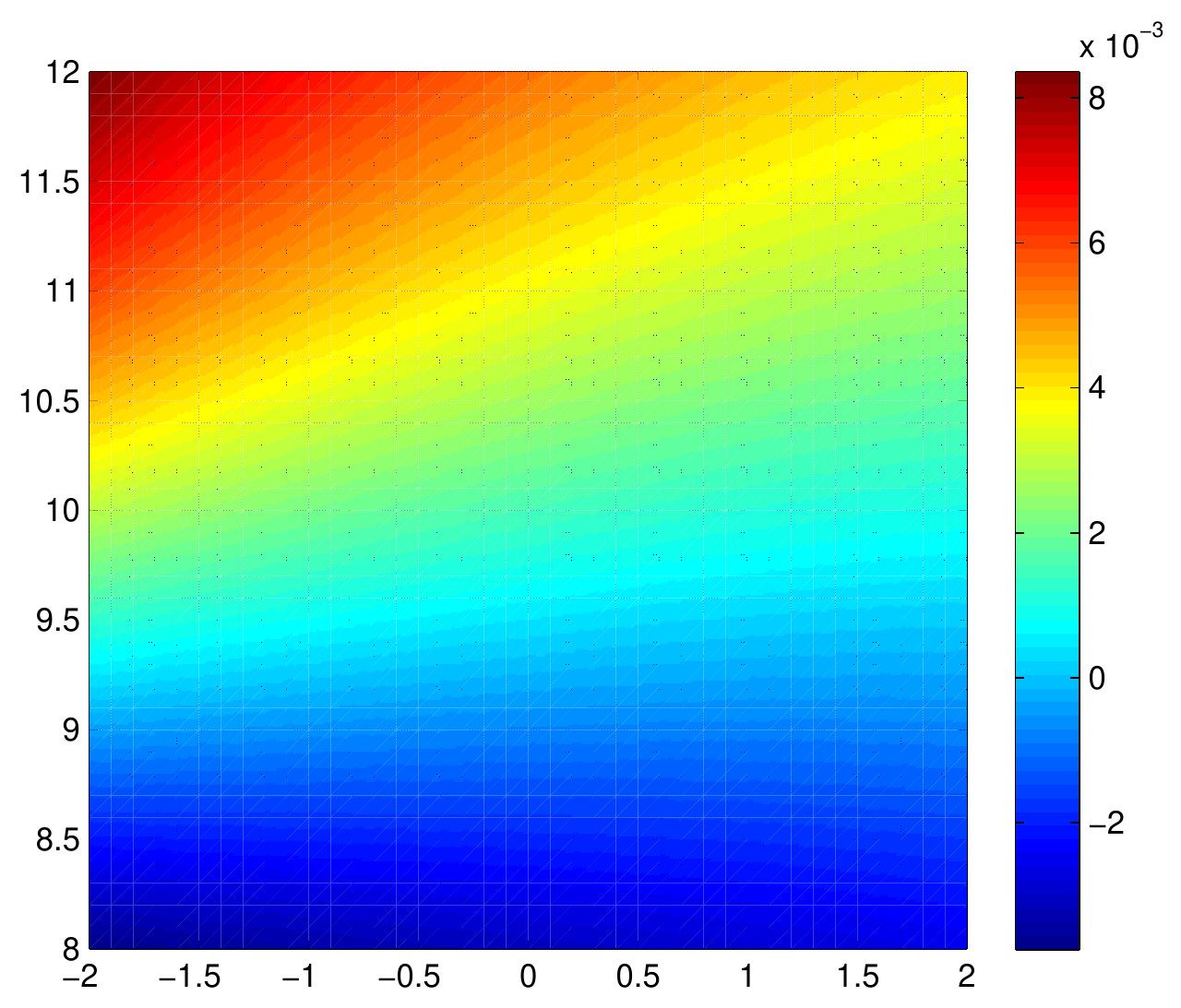}
    \includegraphics[trim=0cm 0cm 0cm 0cm,clip,angle=0,width=0.3\textwidth]{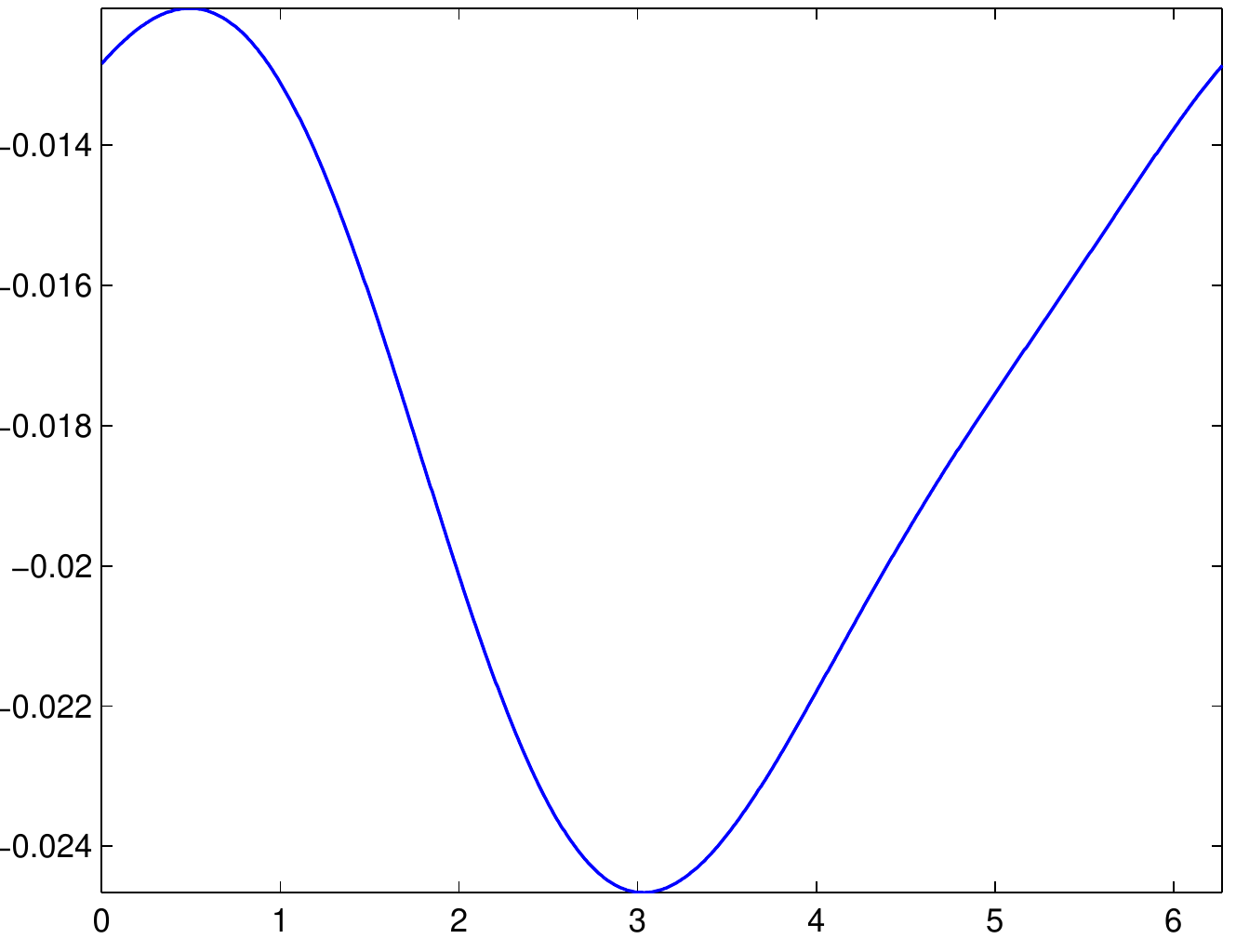}
    \caption{Numerical results in the two-dimensional case. From top left to bottom right: the minimal energy source function $f(\varphi)$, the relative difference between generated and desired fields in the neighborhoods of region $D_1$, $D_2$, generated field on $\partial D$.}
    \label{FIG:2D}
\end{figure}

In the three-dimensional case, we observe very similar results.
Given an arbitrary point $\Bx_1=(10,0,0)$ we considered Question
~\ref{QUST:Main} with $N=1$, $\Gd=1$, $u_0=0$, $u_1=\frac{1}{|\Bx|}$, $D_1=B_2(\Bx_1)$ and $D=B_{15}(\B0)$. The
accuracy parameter is again $\eps=10^{-3}(\|u_1\|_{L^2(D_1)}+\|u_0\|_{L^2(\partial D)})$. The results are shown
in Fig.~\ref{FIG:3D}. On the left plot, we show the minimal energy
source $h(\theta,\varphi)$ where the unit ball is parameterized
using the polar angle $\theta\in[-\pi/2,\pi/2]$ and the azimuth
angle $\varphi\in[0,2\pi)$. On the middle plot, we show the
difference between the generated field and the desired filed on
$\partial B_2(\Bx_1)$. The right plot shows the difference between
the generated field and the desired filed on $\partial B_{15}(\B0)$.
Due to the limitations of visualization, we are not able to show
the difference inside the balls which we observe to be small.
\begin{figure}[ht]
    \centering
    \includegraphics[trim=0cm 0cm 0cm 0cm,clip,angle=0,width=0.3\textwidth]{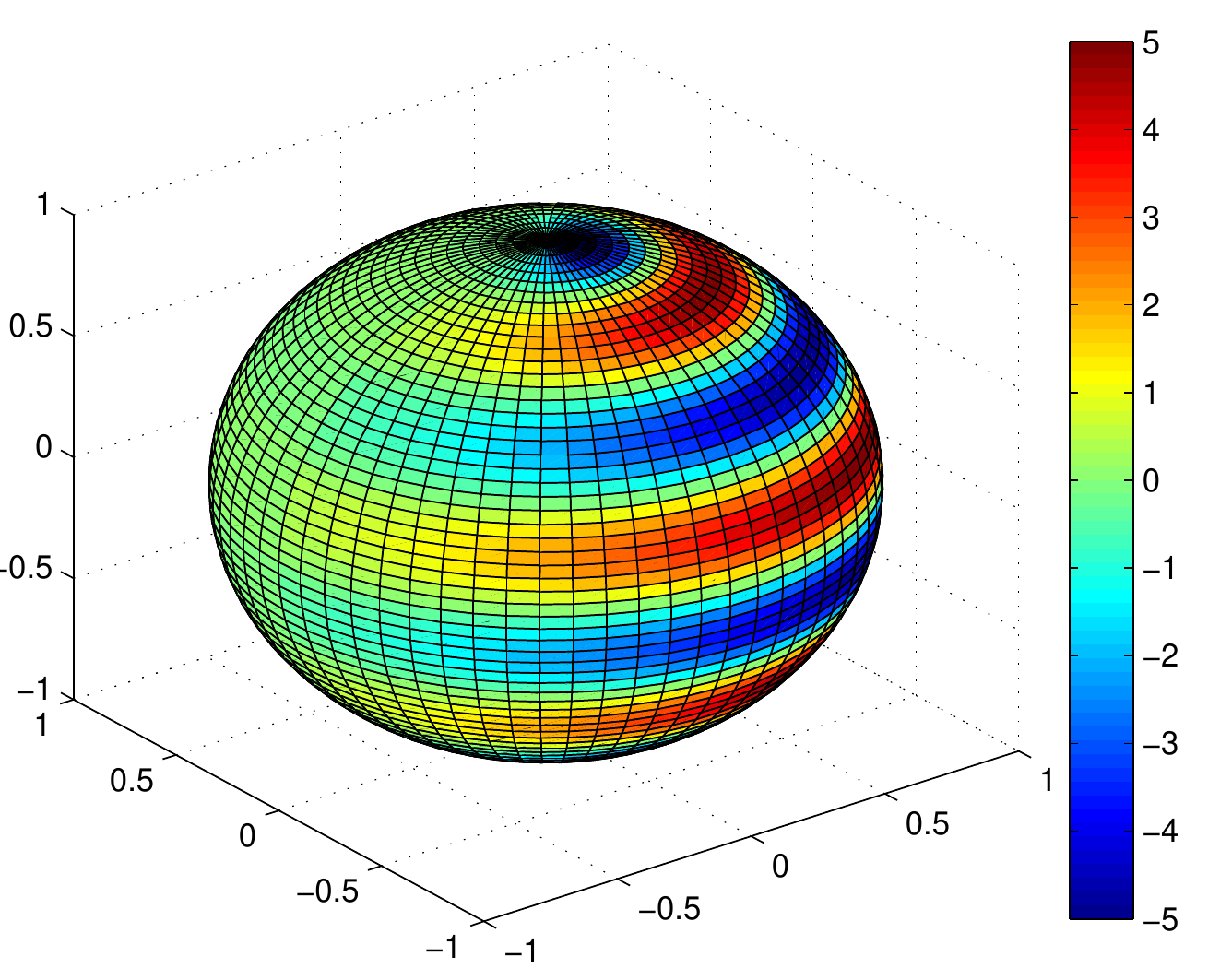}
    \includegraphics[trim=0cm 0cm 0cm 0cm,clip,angle=0,width=0.3\textwidth]{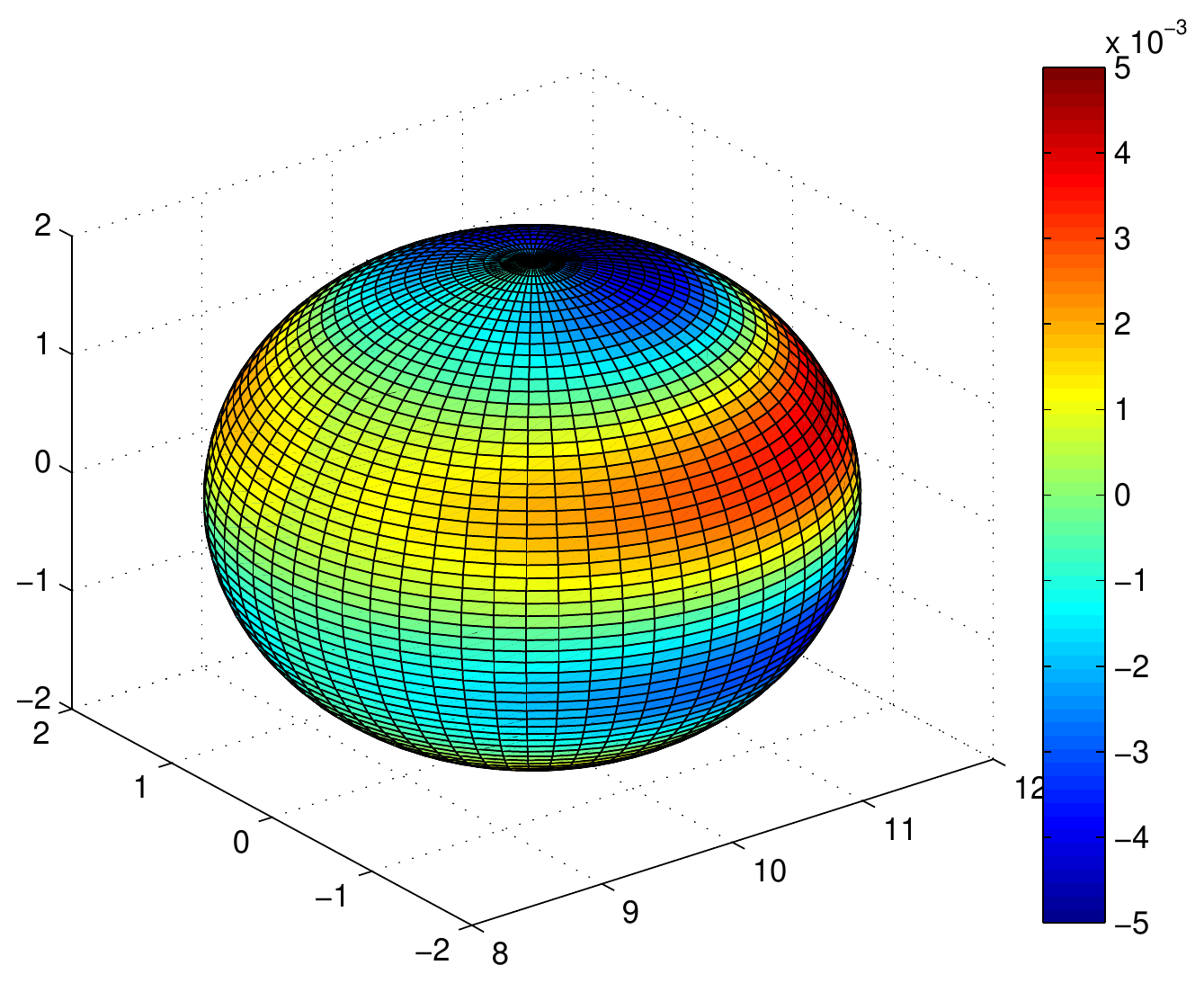}
    \includegraphics[trim=0cm 0cm 0cm 0cm,clip,angle=0,width=0.3\textwidth]{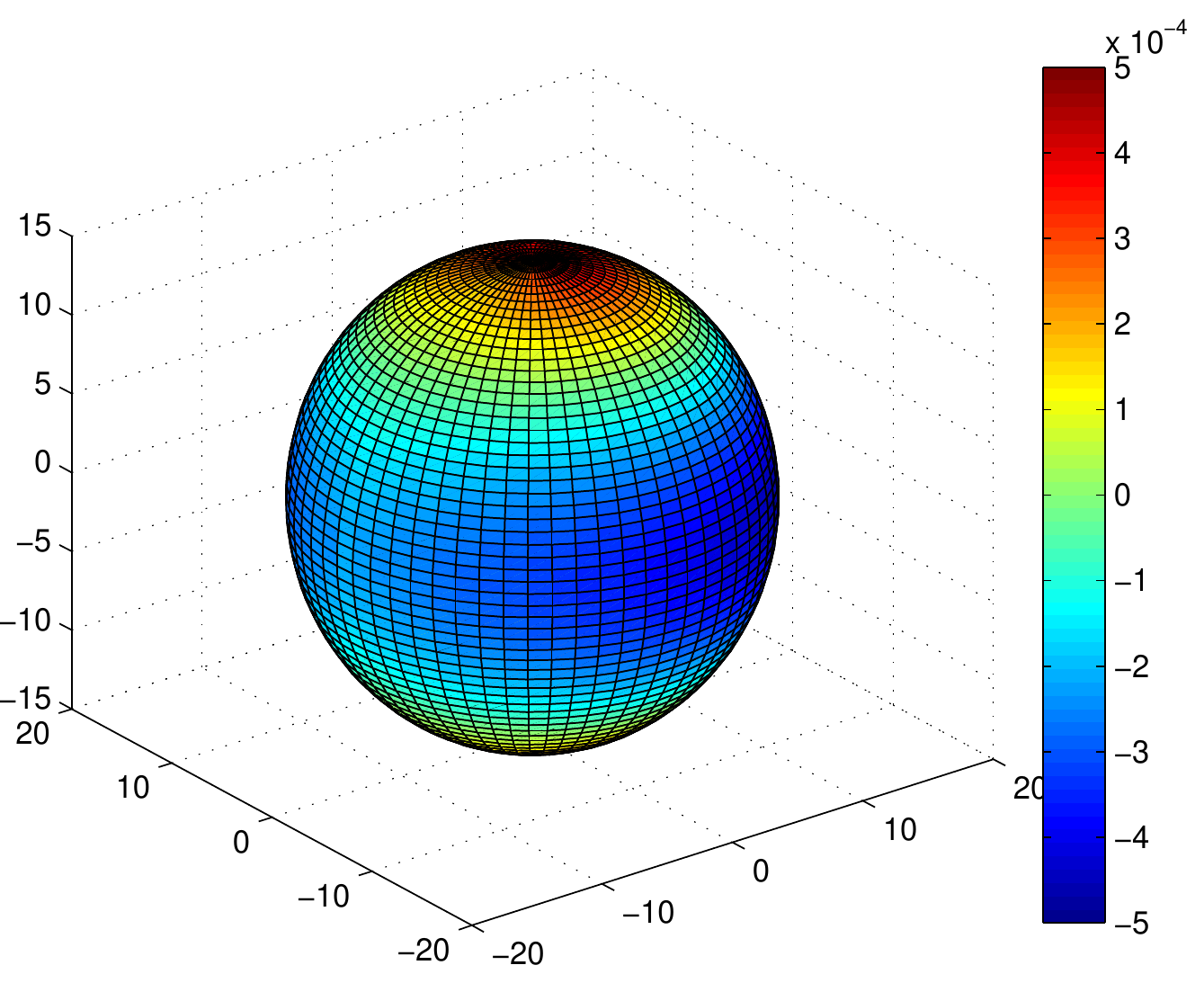}
    \caption{Numerical results in the three-dimensional case. Left: plot of the minimal energy source function $f(\theta,\varphi)$; Middle: plot of the relative difference field on $\partial B_2(\Bx_1)$; Right: plot of the difference field on $\partial B_{15}(\B0)$.}
    \label{FIG:3D}
\end{figure}

The numerical simulations support our proof that the strategy
proposed in this work on generating desired fields in different
regions works well. More systematic numerical studies of the
problem under various situations, together with a stability
analysis will be reported in ~\cite{RenOno}.

%%%%%%%%%%%%%%%%%%%%%%%%%%%%%%%%%%%%%%%%%%%%%%%%%%%%%%%%%%%%%%%%%%
%%%%%%%%%%%%%%%%%%%%%%%%%%%%%%%%%%%%%%%%%%%%%%%%%%%%%%%%%%%%%%%%%%
\section{Concluding remarks}
\label{SEC:Concl}
%%%%%%%%%%%%%%%%%%%%%%%%%%%%%%%%%%%%%%%%%%%%%%%%%%%%%%%%%%%%%%%%%%
%%%%%%%%%%%%%%%%%%%%%%%%%%%%%%%%%%%%%%%%%%%%%%%%%%%%%%%%%%%%%%%%%%

The idea of manipulating quasistatic fields (or in general acoustic and electromagnetic fields)
to generate desired scattering effects has been explored extensively in the engineering community
recently due to its practical importance. In this work we present a systematic method to analyze
mathematically and numerically the feasibility of the active field manipulation strategy. 
 In the quasistatic regime, we show that one can find source functions that are able to generate desired quasistatic field
 in multiple regions of interests, to any given accuracy. This enable us to use the active source
 to create desired illusions or energy focusing without being detected by observations performed
 outside of the domain of interests. In fact, we show that for any given accuracy, there are infinitely
 many sources that can achieve the same effects. The source function that has the minimal energy is
 probably the one that is physically relevant and is weakly stable. Our numerical simulations
 confirm that the strategy can indeed be realized.

The formulation that we present is independent of the spatial
dimension it provides a first step towards the development of
field manipulation techniques in more complicated settings, such
as in low-to-medium frequency acoustic and TE or TM electromagnetic regimes
even though the analysis in those regimes need to be done
carefully due to the change of the integral kernels and the
presence of resonances.
%The numerical scheme for solving the
%problem remains basically unchanged and all we need to do is to
%change the Green function of the problem.
In addition, if the problem is posed in a non-homogeneous medium,
with \emph{known} medium property, the same formulation can be
constructed and the same type of minimal energy solution can be
obtained through the Euler-Lagrange equation~\eqref{EQ:E-L}.

Another essential discussion is about the stability of the
solution. As it is well known the problem of inverting a compact
operator is highly unstable and that is why we focus on the most
physical relevant solution, namely the unique minimum energy
solution. By using the generalized discrepancy principle it can 
be shown that this solution is $L^2$ stable with
respect to small errors at the antennas or in the measurements of
the right hand side data. The $L^2$ stability analysis together
with the associated numerical discussion for the minimal norm
solution will be presented in ~\cite{Ono9}.

There are many potential applications of the method as we
mentioned in the Introduction. {\bf{Formulation A}} with $u_1=0$
corresponds to the problem of the quasistatic active exterior
cloaking as described in \cite{OMV1}. It has been shown
in~\cite{OMV1,OMV2}(see also \cite{OMV3,OMV4} for acoustics) that
with a few active point sources, one can generate similar effects
as what we propose here. This is not surprising because of the
non-uniqueness nature of the problem. Indeed, we believe that the
cases in~\cite{OMV1,OMV2} are special cases of the current
framework, if we are allowed to use continuous functions to
approximate the delta function model of point sources.
Numerically, this can be done by searching for solutions with
minimal $L^1$ norm instead of the $L^2$ norm (the energy norm).
The numerical techniques of $l^1$ minimization can then be
employed to solve the minimization problem.

%%%%%%%%%%%%%%%%%%%%%%%%%%%%%%%%%%%%%%%%%%%%%%%%%%%%%%%%%%%%%%%%%%
%%%%%%%%%%%%%%%%%%%%%%%%%%%%%%%%%%%%%%%%%%%%%%%%%%%%%%%%%%%%%%%%%%
\section*{Acknowledgment}
%%%%%%%%%%%%%%%%%%%%%%%%%%%%%%%%%%%%%%%%%%%%%%%%%%%%%%%%%%%%%%%%%%
%%%%%%%%%%%%%%%%%%%%%%%%%%%%%%%%%%%%%%%%%%%%%%%%%%%%%%%%%%%%%%%%%%

The author would like to thank Graeme W. Milton for very useful discussions 
and Kui Ren for help with the numerical support of the results. 

%%%%%%%%%%%%%%%%%%%%%%%%%%%%%%%%%%%%%%%%%%%%%%%%%%%%%%%%%%%%%%%%%%
%%%%%%%%%%%%%%%%%%%%%%%%%%%%%%%%%%%%%%%%%%%%%%%%%%%%%%%%%%%%%%%%%%
\section*{Appendix}
\label{appendix}
%%%%%%%%%%%%%%%%%%%%%%%%%%%%%%%%%%%%%%%%%%%%%%%%%%%%%%%%%%%%%%%%%%
%%%%%%%%%%%%%%%%%%%%%%%%%%%%%%%%%%%%%%%%%%%%%%%%%%%%%%%%%%%%%%%%%%

\begin{lemma}\label{lemma-1-A}
Let $0<R_1<R_*<R_2$ be three constants and $\By_0\in\RR^d$ an arbitrary point.
%Consider three concentric balls centered in some arbitrary point, ${\By}_0\in\RR^d$, i.e., $B_{R_1}=
%B_{R_1}({\By}_0)$, $B_{R_*}=B_{R_*}({\By}_0)$, and $B_{R_2}=B_{R_2}({\Bx}_0)$, with $R_1<R_*<R_2$.
Let $f,g\in C(\partial B_{R_*}(\By_0))$ and define $v_i\in C^2(B_{R_*}(\By_0))\cap C^1(\overline{B}_{R_*}(\By_0))$ and
$v_e\in C^2(\RR^d\setminus \overline{B}_{R_*}(\By_0))\cap C^1(\RR^d\setminus B_{R_*}(\By_0))$ to be the solutions of the following interior and exterior Dirichlet problems respectively,
\beq
\left\{\begin{array}{ll}
\GD v_i =0 \mbox{ in } B_{R_*}(\By_0)\vspace{0.15cm}\\
v_i=f \mbox{ on } \partial
B_{R_*}(\By_0)\vspace{0.15cm}\end{array}\right.
\eeq
and
\beq
\left\{\begin{array}{lll}\GD v_e =0 \mbox{ in }\RR^d\setminus \bar
B_{R_*}(\By_0)\vspace{0.15cm}\\
v_e=g \mbox{ on } \partial
B_{R_*}(\By_0)\vspace{0.15cm} \\
v_e=\left\{\begin{array}{ll}O(1)\mbox{ for } |\Bx|\rightarrow
\infty,
\mbox{ if } d=2\vspace{0.15cm}\\
o(1)\mbox{ for } |\Bx|\rightarrow \infty, \mbox{ if }
d=3\end{array}\right.\end{array}\right.
\eeq
Then we have,
$$(i)\; \displaystyle\Vert v_i\Vert_{C(B_{R_1}(\By_0))}\leq
\frac{R_*+R_1}{|B_1|R_*(R_*-R_1)^{d-1}}\Vert f\Vert_{L^1(\partial B_{R_*}(\By_0))} $$
$$(ii)\; \displaystyle\Vert v_e\Vert_{C(\RR^d\setminus
B_{R_2}(\By_0))}\leq \frac{R_2+R_*}{|B_1|R_*(R_2-R_*)^{d-1}}\Vert g\Vert_{L^1(\partial B_{R_*}(\By_0))}$$
where $|B_1|$ denotes the volume of the unit ball $B_1(\By_0)$.
\end{lemma}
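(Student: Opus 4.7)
The plan is to derive both bounds from the explicit Poisson integral representation of harmonic functions in (resp.\ the exterior of) a ball, and then reduce each estimate to a pointwise bound on the corresponding Poisson kernel. For part (i), I would start from the classical representation
\begin{equation*}
v_i(\Bx) = \frac{R_*^2 - |\Bx - \By_0|^2}{|\partial B_1|\,R_*}\int_{\partial B_{R_*}(\By_0)}\frac{f(\By)}{|\Bx - \By|^d}\,ds_\By,\qquad \Bx\in B_{R_*}(\By_0),
\end{equation*}
which is available because $f\in C(\partial B_{R_*}(\By_0))$ and $B_{R_*}(\By_0)$ is a ball. Taking absolute values and pulling the kernel outside the integral against $\|f\|_{L^1(\partial B_{R_*}(\By_0))}$ reduces the task to a uniform upper bound on the Poisson kernel over $\Bx\in\bar B_{R_1}(\By_0)$. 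Part (ii) is handled in perfectly analogous fashion, either via the Kelvin transform centered at $\By_0$ (which maps the exterior Dirichlet problem to an interior one on $B_{R_*}(\By_0)$, the radiation condition being exactly what regularizes the transformed function at the center), or directly from the exterior Poisson formula
\begin{equation*}
v_e(\Bx) = \frac{|\Bx - \By_0|^2 - R_*^2}{|\partial B_1|\,R_*}\int_{\partial B_{R_*}(\By_0)}\frac{g(\By)}{|\Bx - \By|^d}\,ds_\By,
\end{equation*}
which one verifies directly satisfies the equation, boundary condition, and prescribed behavior at infinity.

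The decisive algebraic step, common to both parts, is the factorization
\begin{equation*}
\bigl||\Bx - \By_0|^2 - R_*^2\bigr| = \bigl|R_* - |\Bx - \By_0|\bigr|\cdot\bigl(R_* + |\Bx - \By_0|\bigr),
\end{equation*}
combined with the triangle inequality lower bound $|\Bx - \By| \ge \bigl|R_* - |\Bx - \By_0|\bigr|$ valid for $\By\in\partial B_{R_*}(\By_0)$. The first factor cancels exactly one power of $|\Bx - \By|$ in the denominator, so the kernel is majorized by $(R_* + |\Bx - \By_0|)/|\Bx - \By|^{d-1}$ (up to the normalizing constant). For (i), substituting $|\Bx - \By_0|\le R_1$ in the numerator and $|\Bx - \By|\ge R_*-R_1$ in the denominator immediately yields the factor $(R_* + R_1)/(R_* - R_1)^{d-1}$ and hence the stated bound. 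For (ii), the same reduction gives the function $t \mapsto (t + R_*)/(t - R_*)^{d-1}$ with $t=|\Bx - \By_0|\ge R_2$; a one-line derivative computation shows that this function is monotonically decreasing on $(R_*,\infty)$ for every $d\ge 2$, so its supremum on $t\ge R_2$ is attained at $t=R_2$, producing the claimed $(R_2+R_*)/(R_2-R_*)^{d-1}$ constant.

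The argument is essentially calculational and I do not anticipate any serious obstacle. The two small points requiring a little care are, first, in the two-dimensional exterior problem, using the normalization of the exterior Poisson kernel that keeps $v_e$ bounded at infinity (so that its kernel integrates to the appropriate constant on $\partial B_{R_*}(\By_0)$); and, second, verifying that the kernel bound is actually continuous up to the closed ball $\bar B_{R_1}(\By_0)$ (resp.\ the closed exterior $\RR^d\setminus B_{R_2}(\By_0)$), which follows from boundary continuity of $v_i$ (resp.\ $v_e$) inherited from continuity of $f$ (resp.\ $g$). Neither point affects the structure of the proof; the real content is the cancellation of one power of $|\Bx-\By|$ by the factorization, which is what produces the exponent $d-1$ (rather than $d$) in the denominator of both estimates.
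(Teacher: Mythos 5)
Your proposal is correct and follows essentially the same route as the paper's own proof: the interior/exterior Poisson representations, the triangle inequality $|\Bx-\By|\ge\bigl||\Bx-\By_0|-|\By-\By_0|\bigr|$, and the factorization of $R_*^2-|\Bx-\By_0|^2$ to cancel one power of $|\Bx-\By|$ and produce the exponent $d-1$. The only cosmetic difference is in part (ii), where the paper writes $R_*+|\Bx|=(|\Bx|-R_*)+2R_*$ and bounds the two resulting terms separately, whereas you invoke monotonicity of $t\mapsto(t+R_*)/(t-R_*)^{d-1}$; both yield the identical constant.
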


\begin{proof}
Without loss of generality, we assume that the three balls
are centered in the origin, i.e., ${\By}_0={\B0}$. In this
condition, form the Poisson formula we have,
\beq
v_i(\Bx)=\frac{1}{|B_1|}\int_{\partial B_{R_*}(\B0)}f(\By)\;\frac{R_*^2-|\Bx|^2}{R_*|\Bx-\By|^d}\;ds_{\By},\mbox{
for } |\Bx|< R_*
\eeq{3-A}
and
\beq
v_e(\Bx)=\frac{1}{|B_1|}\int_{\partial
B_{R_*}(\B0)}g(\By)\;\frac{|\Bx|^2-R_*^2}{R_*|\Bx-\By|^d}\;ds_{\By},\mbox{
for } |\Bx|>R_*
\eeq{4-A}
where $|B_1|$ denotes the volume of the $d$-dimensional unit ball.
Recall that the triangle inequality states,
\beq
|\Bx-\By|\geq||\Bx|-|\By||,\mbox{ for all
}\Bx,\By\in\RR^d
\eeq{5-A}
From (\ref{3-A}) and (\ref{5-A}) we obtain
$$|v_i(\Bx)|\leq \frac{1}{|B_1|}\int_{\partial B_{R_*}(\B0)}|f(\By)|\frac{R_*+|\Bx|}{R_*|R_*-|\Bx||^{d-1}}ds_{\By},\mbox{
for }|\Bx|<R_* $$ Thus
\beq
|v_i(\Bx)|\leq\frac{R_*+R_1}{|B_1|R_*(R_*-R_1)^{d-1}}\int_{\partial
B_{R_*}(\B0)}|f(\By)|ds_{\By},\mbox{ for }|\Bx|\leq R_1
\eeq{6-A}
From (\ref{4-A}) and (\ref{5-A}) similarly we obtain
$$|v_e(\Bx)|\leq \frac{1}{|B_1|}\int_{\partial B_{R_*}(\B0)}|g(\By)|\frac{R_*+|\Bx|}{R_*||\Bx|-R_*|^{d-1}}ds_{\By},\mbox{
for }|\Bx|>R_* $$
and this implies
$$|v_e(\Bx)|\leq\frac{R_*+|\Bx|}{|B_1{R_*||\Bx|-R_*|^{d-1}}}\int_{\partial B_{R_*}(\B0)}|g(\By)|ds_{\By},\mbox{ for }|\Bx|>R_*.$$
By using simple algebra the last inequality becomes
$$|v_e(\Bx)|\leq\frac{1}{|B_1|R_*}\left(\frac{1}{(|\Bx|-R_*)^{d-2}}+\frac{2R_*}{(|\Bx|-R_*)^{d-1}}\right)\int_{\partial
B_{R_*}(\B0)}|g(\By)|ds_{\By},\mbox{ for }|\Bx|>R_*.$$
and, similarly as for (\ref{6-A}) this implies
\beq|v_e(\Bx)|\leq\frac{1}{|B_1|R_*}\left(\frac{1}{(R_2-R_*)^{d-2}}+\frac{2R_*}{(R_2-R_*)^{d-1}}\right)\int_{\partial
B_{R_*}(\B0)}|g(\By)|ds_{\By},\mbox{ for }|\Bx|\geq R_2.\eeq{7-A} The
statement of the Lemma is implied by (\ref{6-A}), and (\ref{7-A}).
\end{proof}

The next result is classical but for the self contained character of the paper we choose to include it here.
\begin{lemma}
    The operator $K$ defined in~\eq{EQ:K} is a compact linear operator from $L^2(\partial B_\Gd(\B0))$ to $\Xi$.
\end{lemma}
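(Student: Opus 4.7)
The plan is to establish linearity trivially, then deduce compactness by showing that $K$ is a Hilbert--Schmidt operator, leveraging the geometric separation of $\partial B_\Gd(\B0)$ from both target surfaces $\partial B_{a'}(\Bx_0)$ and $\partial B_{R'}(\B0)$.

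First, I would observe that linearity of $K$ follows immediately from the linearity of integration, since both $K_1$ and $K_2$ are defined by integrating $u$ against fixed kernels, and the direct sum structure of $\Xi$ makes $K = (K_1, K_2)$ linear.

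The main content is compactness. I would write $K = (K_1, K_2)$ and prove that each component is compact; compactness of $K$ into the Hilbert direct sum $\Xi$ then follows immediately. For $K_1$, the kernel
\[
k_1(\Bx,\By) = \frac{\partial \Phi(\Bx,\By)}{\partial \BGv_\By}, \qquad \Bx\in\partial B_{a'}(\Bx_0),\ \By\in\partial B_\Gd(\B0),
\]
is $C^\infty$ in both variables jointly, because the parameter condition $|\Bx_0|>a'+\Gd$ from~(\ref{par-cond}) guarantees that the two surfaces are disjoint with positive distance at least $d_1 := |\Bx_0|-a'-\Gd>0$, so $|\Bx-\By|\ge d_1$ on the product domain and the singularity of $\Phi$ is avoided. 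Similarly, the condition $R'>|\Bx_0|+a'$ (together with $\Gd$ small) gives a positive separation $d_2 := R'-\Gd>0$ between $\partial B_{R'}(\B0)$ and $\partial B_\Gd(\B0)$, so the kernel $k_2(\Bz,\By)=\partial\Phi(\Bz,\By)/\partial\BGv_\By$ is likewise $C^\infty$ and bounded on the compact product.

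Since each kernel is continuous on the compact product of two smooth bounded surfaces, $k_1\in L^2(\partial B_{a'}(\Bx_0)\times \partial B_\Gd(\B0))$ and $k_2\in L^2(\partial B_{R'}(\B0)\times \partial B_\Gd(\B0))$. The standard Hilbert--Schmidt criterion then yields that $K_1$ and $K_2$ are Hilbert--Schmidt operators, and in particular compact, from $L^2(\partial B_\Gd(\B0))$ into $L^2(\partial B_{a'}(\Bx_0))$ and $L^2(\partial B_{R'}(\B0))$ respectively. Finally, because $K u = (K_1 u, K_2 u)$ and a sequence $\{u_n\}$ bounded in $L^2(\partial B_\Gd(\B0))$ admits a subsequence along which both $K_1 u_n$ and $K_2 u_n$ converge in their respective $L^2$ spaces (extract once for $K_1$, then again for $K_2$), the pair converges in $\Xi$, proving compactness of $K$.

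No step looks genuinely hard; the only thing to be careful about is verifying the disjointness of the surfaces from the parameter conditions~(\ref{par-cond}) so that the kernels are smooth, after which Hilbert--Schmidt theory does all the work. An alternative route via Arzel\`a--Ascoli (smoothness of the kernel gives equicontinuity of $K_j u_n$ on bounded sets) would work equally well, but the Hilbert--Schmidt approach is cleaner and fits the $L^2$ framework used throughout the paper.
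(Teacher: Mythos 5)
Your proof is correct, and it takes a genuinely different (though closely related) route from the paper's. Both arguments hinge on the same geometric observation: the parameter conditions in (\ref{par-cond}) force $\dist(\partial B_{a'}(\Bx_0),\partial B_\Gd(\B0))=|\Bx_0|-a'-\Gd>0$ and $\dist(\partial B_{R'}(\B0),\partial B_\Gd(\B0))=R'-\Gd>0$, so the kernels $\partial\Phi(\cdot,\cdot)/\partial\BGv_\By$ are smooth and bounded on the relevant compact product surfaces. You then invoke the Hilbert--Schmidt criterion as a black box: a kernel in $L^2$ of the product of two finite measure spaces induces a compact (indeed Hilbert--Schmidt) operator, and compactness of $K=(K_1,K_2)$ into the Hilbert direct sum follows by a double subsequence extraction. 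The paper instead proves compactness by hand: it takes a bounded sequence $\{u_n\}$, extracts a weakly convergent subsequence $u_n\rightharpoonup u$, uses continuity of the kernels to get pointwise convergence $K_ju_n(\Bx)\to K_ju(\Bx)$, bounds $|K_j(u_n-u)(\Bx)|$ uniformly via Cauchy--Schwarz and the same distance estimates, and concludes strong $L^2$ convergence by dominated convergence. In effect the paper reproves, in this special case, the compactness statement you cite; your version is shorter and cleaner, while the paper's is self-contained and makes the role of the separation constants explicit in the quantitative bounds. Your argument is complete as stated.
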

\begin{proof}

Let $\{u_n\}$ be a bounded sequence in $L^2(\partial B_\Gd(\B0))$,
i.e.,
\beq
\Vert u_n\Vert_{L^2(\partial B_\Gd(\B0))}\leq C
\eeq{8-A}
with $C$ independent of $n$. Then there exists $u\in L^2(\partial
B_\Gd(\B0))$ such that, up to a subsequence still indexed by $n$, we
have
\beq
    u_n\rightharpoonup u,\mbox{ weekly in } L^2(\partial B_\Gd(\B0))
\eeq{9-A}
Moreover, \eq{8-A} and \eq{9-A} imply,
\beq
\|u\|_{L^2(\partial B_\delta(\B0))}\leq C
\eeq{10-A}
From the continuity of the kernels of $K_1$ and $K_2$ defined at \eq{8} and by using \eq{9-A} we obtain,

\beqa
K_1u_n(\Bx)& \rightarrow & K_1u(\Bx),\mbox{ for all } \Bx\in\partial B_{a'}(\Bx_0)\nonumber\\
& &\nonumber\\
K_2u_n(\Bz)&\rightarrow & K_2u(\Bz),\mbox{ for all } \Bz\in\partial B_{R'}(\B0)
\eeqa{11-A}
Next observe that
\beq
\displaystyle\frac{\partial
\Phi(\Bx,\By)}{\partial\BGv_{\By}}=\frac{1}{|B_1|}\frac{(\Bx-\By)\cdot
\By}{\Gd|\Bx-\By|^d},\mbox{ for }\By\in\partial B_{\Gd}(\B0) \mbox{ and
for any }\Bx\neq\By
\eeq{12-A}
where again $|B_1|$ is the volume of the $d$-dimensional unit ball and we used here that
$\displaystyle\BGv_{\By}=\frac{\By}{|\By|}$ on $\partial B_{\Gd}(\B0)$.
From \eq{8-A}, \eq{9-A}, \eq{10-A} and \eq{12-A} we have
\beqa
|K_1(u_n-u)(\Bx)| & = & \left|\displaystyle \int_{\partial
B_\Gd(\B0)}(u_n-u)(\By)\frac{\partial
\Phi(\Bx,\By)}{\partial\BGv_{\By}}ds_{\By}\right| \nonumber\\
&&\nonumber\\
 &\leq& \Vert u_n-u\Vert_{L^2(\partial
B_\Gd(\B0))}\left\Vert\frac{\partial
\Phi(\Bx,\By)}{\partial\BGv_{\By}}\right\Vert_{L^2(\partial
B_\Gd(\B0))}\nonumber \\
&&\nonumber\\
&\leq& \displaystyle
C\left(\frac{1}{|\Bx_0|-a'-\Gd}\right)^{d-1}\;\;\;\;\mbox{ for
}\Bx\in\partial B_{a'}(\Bx_0)
\eeqa{13-A}
where we used that $$\displaystyle \dist(\partial B_{a'}(\Bx_0), \partial B_\Gd(\B0)) = |\Bx_0|-a'-\Gd>0$$ with $|\Bx_0|-a'-\Gd>0$ following from~\eqref{EQ:Parameters}.

Similarly as for \eq{13-A}, from \eq{8-A}, \eq{9-A} \eq{12-A} we obtain,
\beqa
|K_2(u_n-u)(\Bx)| & = & \left|\displaystyle \int_{\partial
B_\Gd(\B0)}(u_n-u)(\By)\frac{\partial
\Phi(\Bz,\By)}{\partial\BGv_{\By}}ds_{\By}\right|\nonumber\\
&&\nonumber\\
 &\leq& \Vert u_n-u\Vert_{L^2(\partial
B_\Gd(\B0))}\left\Vert\frac{\partial
\Phi(\Bz,\By)}{\partial\BGv_{\By}}\right\Vert_{L^2(\partial
B_\Gd(\B0))}\nonumber\\
&&\nonumber\\
&\leq& \displaystyle C\frac{1}{(R'-\Gd)}\;\;\;\;\mbox{ for
}\Bz\in\partial B_{R'}(\B0)
\eeqa{14-A}
where we used that
$$\displaystyle \dist(\partial B_{R'}(\B0), \partial B_\Gd(\B0)) = R'-\Gd>0$$

From \eq{11-A}, \eq{13-A} and \eq{14-A} and the Lebesgue dominated convergence theorem, we obtain
\beqa
K_1u_n\rightarrow K_1u & &\mbox{ strongly in } L^2(\partial B_{a'}(\Bx_0))\nonumber\\
&&\nonumber\\
K_2u_n\rightarrow K_2u & &\mbox{ strongly in } L^2(\partial B_{R'}(\B0))
\eeqa{15-A}
Then from the definition of the operator $K$ and from \eq{15-A} we conclude that
$$Ku_n\rightarrow Ku \mbox{ strongly in }\Xi$$ and this implies the statement of the lemma.
\end{proof}

%%%%%%%%%%%%%%%%%%%%%%%%%%%%%%%%%%%%%%%%%%%%%%%%%%%%%%%%%%%%%%%%%%
%%%%%%%%%%%%%%%%%%%%%%%%%%%%%%%%%%%%%%%%%%%%%%%%%%%%%%%%%%%%%%%%%%
%\bibliography{/home/ren/Academic/Bibliography/BIB-KR,/home/ren/Academic/Bibliography/BIB-YH}
%\bibliographystyle{siam}

%%%%%%%%%%%%%%%%%%%%%%%%%%%%%%%%%%%%%%%%%%%%%%%%%%%%%%%%%%%%%%%%%%
%%%%%%%%%%%%%%%%%%%%%%%%%%%%%%%%%%%%%%%%%%%%%%%%%%%%%%%%%%%%%%%%%%

\end{document}